%
%
%


\documentclass{amsproc}
\usepackage{amsmath}
\usepackage{amssymb}
\usepackage{thm-restate}
\usepackage[all,arc]{xy}
\usepackage{enumerate}
\usepackage{mathrsfs}
\usepackage{subcaption,graphicx}
\usepackage{pinlabel}
\newtheorem{thm}{Theorem}[section]

\newtheorem{prop}[thm]{Proposition}
\newtheorem{lem}[thm]{Lemma}

\usepackage{graphicx}
\theoremstyle{definition}
\newtheorem{defn}[thm]{Definition}

\newtheorem{exmp}[thm]{Example}

\usepackage{xcolor}
\theoremstyle{remark}
\newtheorem{rem}[thm]{Remark}

\newcommand{\Hom}{\text{Hom}}
\usepackage{mathrsfs}
\usepackage{pinlabel}
\usepackage{graphicx}


\usepackage{}


\newtheorem{theorem}{Theorem}[section]

\theoremstyle{definition}

\theoremstyle{remark}

\numberwithin{equation}{section}

\begin{document}

\title{Biquandles, quivers and virtual bridge indices}


\author{Tirasan Khandhawit}
\address{272 Rama VI Road, Thung Phayathai,
Ratchathewi, Bangkok, 10400}
\curraddr{}
\email{tirasan.kha@mahidol.edu}
\thanks{}

\author{Puttipong Pongtanapaisan}
\address{901 Palm Walk, Tempe, AZ 85281}
\curraddr{}
\email{ppongtan@asu.edu}
\thanks{}

\author{Brandon Wang}
\address{901 Palm Walk, Tempe, AZ 85281}
\curraddr{}
\email{bhwang10@asu.edu}
\thanks{}

\subjclass[2020]{Primary 57K10}

\date{}

\begin{abstract}
We investigate connections between biquandle colorings, quiver enhancements, and several notions of the bridge numbers \(b_i(K)\) for virtual links, where \(i=1,2\). We show that for any positive integers \(m \leq n\), there exists a virtual link \(K\) with \(b_1(K) = m\) and \(b_2(K) = n\), thereby answering a question posed by Nakanishi and Satoh. In some sense, this gap between the two formulations measures how far the knot is from being classical. We also use these bridge number analyses to systematically construct families of links in which quiver invariants can distinguish between links that share the same biquandle counting invariant.
\end{abstract}

\maketitle

\section{Introduction}
A \textit{biquandle} is an algebraic structure that generalizes the idea of a \textit{quandle}, which is in turn a generalization of conjugation in group theory. Moreover, its axioms can be seen as motivated by the Reidemeister moves in knot theory \cite{joyce1982classifying,matveev1982distributive}. One can associate a \textit{fundamental biquandle} to each knotted object type, which offers more information than the fundamental group in some situations. In fact, the fundamental biquandle completely classifies knots in 3-space, given a mild assumption. Even though it provides different insights into knot theory, biquandles can be difficult to understand. It is conjectured in \cite{fenn2004biquandles} that the fundamental biquandle is a complete invariant of virtual knots up to mirror reflection.

To unravel the structures of biquandles, it is common practice to study homomorphisms between them. A basic way to distinguish two biquandles, \( X \) and \( X' \), is to compare the cardinalities \( | \text{Hom}(X,Y) | \) and \( | \text{Hom}(X',Y) | \). In fact, fundamental tools in knot theory, such as Fox colorings, arise in this way. When $X$ is the fundamental biquandle, the authors of \cite{vo2024learning} showed that \( | \text{Hom}(X,Y) | \) can be used to show that the gap between the first and the second bridge index of virtual knots can be arbitrarily large. In this paper, we strengthen the result in \cite{vo2024learning}. In particular, this answers Question 3.2 in \cite{nakanishi2015two}.
\begin{restatable}{thm}{main}
\label{thm:main}
    For any positive integers $m \leq n$, there is a virtual knot $K$ with $b_1(K) = m$ and $b_2(K) = n$.
\end{restatable}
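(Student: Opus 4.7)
The plan is to construct, for each pair $m \le n$, an explicit virtual knot $K_{m,n}$ realizing $(b_1, b_2) = (m,n)$. A natural choice is a connected sum $K_{m,n} = J_m \# L_{m,n}$, where $J_m$ is a classical knot of bridge number exactly $m$ (so automatically $b_1(J_m) = b_2(J_m) = m$), and $L_{m,n}$ is a virtual knot with $b_1(L_{m,n}) = 1$ but $b_2(L_{m,n}) = n - m + 1$. Suitable building blocks $L_{m,n}$ should come from adapting the examples of \cite{vo2024learning} that achieve arbitrarily large $b_2 - b_1$ gap, and then tuning a parameter so that the gap is exactly $n - m$.

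For the upper bounds, I would stack diagrams of the two factors to obtain a diagram of $K_{m,n}$ and invoke the standard subadditivity $b_i(K \# K') \le b_i(K) + b_i(K') - 1$ for each bridge index $i=1,2$, which yields $b_1(K_{m,n}) \le m$ and $b_2(K_{m,n}) \le n$.

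For the lower bounds, I would invoke biquandle counting invariants. The guiding principle, already exploited in \cite{vo2024learning}, is that if $|\text{Hom}(X_K,Y)|$ is sufficiently large for some finite target biquandle $Y$, then $K$ cannot be presented with too few bridges in either sense, because each bridge generator in a presentation contributes at most one independent choice of image in $Y$, bounding the count by a power of $|Y|$. I would choose $Y$ so that the coloring count $|\text{Hom}(X_{K_{m,n}},Y)|$ forces $b_2(K_{m,n}) \ge n$ via the $b_2$-inequality, while the classical factor $J_m$ contributes the lower bound $b_1(K_{m,n}) \ge m$ through the classical bridge number inequality (which $b_1$ agrees with on classical knots).

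The main obstacle is calibrating the construction so that all four inequalities become equalities simultaneously, and so that the connected-sum operation does not inadvertently collapse the gap. Concretely, I expect the delicate step to be selecting $L_{m,n}$ and $Y$ in tandem so that $Y$ is rich enough to detect $b_2(L_{m,n}) = n-m+1$ (and hence $b_2(K_{m,n}) \ge n$ after summing with $J_m$), yet compatible with the classical colorings that witness $b_1(J_m) = m$, so that the $b_1$ bound is not pushed above $m$. The bulk of the argument will consist of a tailored biquandle coloring computation for $L_{m,n}$ that exhibits precisely the gap $n-m$, together with a verification that these counts survive the connected sum.
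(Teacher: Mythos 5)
Your overall blueprint---realize $(m,n)$ by a connected sum whose summands separately control $b_1$ and $b_2$, get the upper bounds by stacking diagrams, and get the lower bounds from coloring counts---is the same as the paper's, which takes $\kappa_{m,n}$ to be a connected sum of $m-1$ copies of a tricolorable virtual knot $L$ (controlling $b_1$) and $n-m$ copies of a virtual knot $J$ with $b_1(J)=1$ and $b_2(J)=2$ (controlling $b_2$). As written, however, your argument has three concrete gaps. First, the lower bound $b_1(K_{m,n})\geq m$ cannot be obtained by letting ``the classical factor contribute'' it: once you connect-sum with a virtual knot the composite is a virtual knot, Schubert-type additivity no longer applies, and there is no a priori reason that $b_1$ of the composite dominates $b_1$ of the classical summand. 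The paper instead derives this bound from an invariant of the whole composite, namely $Col_{R_3}(\#^{m-1}L)=3^{m}\leq 3^{b_1}$, using the connected sum formula of \cite{clark2016quandle} together with Proposition \ref{prop:bridge}. Second, the upper bound $b_1\leq m$ via subadditivity is not automatic for virtual knots: connected sum depends on the arcs along which it is performed, and the paper must attach each summand along a short arc that is colored last in a Wirtinger coloring sequence of the previous summand and is a seed of the next, precisely so that the number of seeds does not grow.

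Third, and most substantively, the existence of a building block with $b_1=1$ and $b_2$ equal to any prescribed value, certified by a biquandle $Y$ for which the coloring count equals the full power $|Y|^{b_2}$ and behaves multiplicatively (up to a factor of $1/|Y|$) under connected sum, is essentially the entire content of the theorem; \cite{vo2024learning} produces arbitrarily large gaps, but you still need every intermediate value and compatibility with whatever invariant certifies the $b_1$ bound. The paper resolves this by exhibiting an explicit order-$4$ biquandle $T$, tabulating all $16$ colorings of $J$ and of $L$ (Tables \ref{tab:example} and \ref{tab:example2}), and checking that for each fixed short arc the colorings fall into four blocks of four, which gives $Col_T(K_1\# K_2)=\tfrac{1}{4}\,Col_T(K_1)\,Col_T(K_2)$ and hence $Col_T(\kappa_{m,n})=4^{n}\leq 4^{b_2}$ by induction. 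Until you supply analogous computations for your chosen $J_m$, $L_{m,n}$, and $Y$, the proposal is a plan rather than a proof.
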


 Using the terminology in \cite{elhamdadi2015quandles}, we are also interested in \textit{enhancements}, which are stronger invariants that allow us to recover \( | \text{Hom}(X,Y) | \). Some enhancements include cocycle enhancements and subquandle polynomial enhancements (see \cite{elhamdadi2015quandles} and references therein for more examples). More recently, the quandle coloring quiver was introduced in \cite{cho2019quandle} as a categorification of the quandle counting invariant. In that paper, the authors already demonstrated that the quandle coloring quiver is a proper enhancement. That is, there exists a pair of links, \( L \) and \( L' \), such that the number of colorings of \( L \) by a quandle \( X \) is the same as that of \( L' \), but \( L \) and \( L' \) are distinguished by their quiver enhancement.

In addition to classical links, quandle coloring quivers have also been defined for other knotted objects, such as surface-links \cite{kim2021quandle} and virtual links \cite{nelson2024quandle}. Tanaguchi showed that when dihedral quandles of prime orders are used, the quiver never gives proper enhancements \cite{taniguchi2021quandle}. In this paper, we use intuitions from the connections between bridge numbers and biquandles to systematically decide when the enhancement is proper. For instance, in a similar vein to Taniguchi's theorem, we can conclude the following statement.

\begin{restatable}{lem}{bridgequiver}
\label{lem:main}
       If the biquandle counting invariant using a finite biquandle $|X|$ of a virtual link $L_i$ is exactly $|X|^{b_2(L_i)}$ for $i=1,2$, then the quiver enhancements for $L_1$ and $L_2$ are isomorphic. 
\end{restatable}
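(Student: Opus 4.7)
The plan is to show that, under the stated hypothesis, each quiver $\mathcal{Q}^{X}(L_i)$ is canonically isomorphic to a ``free'' quiver depending only on $X$ and on the integer $b_2(L_i)$; the equality of vertex counts then forces the two quivers to agree.

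First, I would use a virtual bridge presentation of $L_i$ realizing $b_2(L_i)$ to obtain a presentation of the fundamental biquandle $\mathcal{B}(L_i)$ with exactly $b_2(L_i)$ generators $x_1^{(i)},\dots,x_{b_2(L_i)}^{(i)}$, one per overpass (the biquandle analogue of the Wirtinger/bridge presentation, which is the same tool used in the proof of Theorem~\ref{thm:main}). Since these elements generate $\mathcal{B}(L_i)$, the restriction-to-generators map
\[
\rho_i\colon\mathrm{Hom}(\mathcal{B}(L_i),X)\longrightarrow X^{b_2(L_i)},\qquad f\longmapsto\bigl(f(x_1^{(i)}),\dots,f(x_{b_2(L_i)}^{(i)})\bigr),
\]
is injective. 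The hypothesis $|\mathrm{Hom}(\mathcal{B}(L_i),X)|=|X|^{b_2(L_i)}$ then forces $\rho_i$ to be a bijection, so every labelling of the generators by elements of $X$ extends to a unique biquandle homomorphism.

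Next, I would transport the quiver structure through $\rho_i$. The vertices of the quiver are the biquandle homomorphisms $\mathcal{B}(L_i)\to X$, and for each $\phi$ in the fixed set $S\subseteq\mathrm{End}(X)$ used to define the enhancement there is an arrow $f\to\phi\circ f$. Because post-composition with $\phi$ commutes with evaluation at generators, $\rho_i$ intertwines the map $f\mapsto\phi\circ f$ with the diagonal action $(y_1,\dots,y_{b_2(L_i)})\mapsto(\phi(y_1),\dots,\phi(y_{b_2(L_i)}))$ on $X^{b_2(L_i)}$. Consequently, $\mathcal{Q}^{X}(L_i)$ is isomorphic as a quiver to the ``free'' quiver $\mathcal{U}^{X}_{b_2(L_i)}$ with vertex set $X^{b_2(L_i)}$ and arrows given by the diagonal action of $S$, and this quiver depends only on the pair $(X,b_2(L_i))$.

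Finally, any quiver isomorphism $\mathcal{Q}^{X}(L_1)\cong\mathcal{Q}^{X}(L_2)$ requires equal vertex counts $|X|^{b_2(L_1)}=|X|^{b_2(L_2)}$, which (assuming $|X|\geq 2$, else the statement is vacuous) forces $b_2(L_1)=b_2(L_2)=:b$; both quivers then coincide with $\mathcal{U}^{X}_{b}$, giving the desired isomorphism. The main obstacle is the first step---rigorously producing the biquandle presentation with $b_2(L_i)$ generators from a virtual bridge presentation---but this is the same tool already deployed to prove Theorem~\ref{thm:main}, so it may be invoked directly.
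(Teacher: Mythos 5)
Your proof takes essentially the same route as the paper's: identify the vertex set of the quiver with $X^{b_2(L_i)}$ via the colors at the $b_2(L_i)$ local maxima (the counting hypothesis upgrades the injective restriction map to a bijection), observe that the edges are given by the diagonal action of $S\subseteq\mathrm{Hom}(X,X)$ on tuples, and conclude that each quiver depends only on $X$ and $b_2(L_i)$. You are also right to flag that the conclusion additionally needs $b_2(L_1)=b_2(L_2)$ (equivalently, equal counting invariants) when $|X|\geq 2$ --- a point the paper's own proof silently assumes when it writes down a bijection between $(\mathbb{Z}_{|X|})^{b_2(L_1)}$ and $(\mathbb{Z}_{|X|})^{b_2(L_2)}$.
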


Some work has also been done on using the quiver to enhance other existing enhancements \cite{cho2019quandle2,istanbouli2020quandle}. By this, we mean that while the cocycle invariant is already an enhancement of the counting invariant, there exist links with identical cocycle invariants that are nevertheless distinguished by their quivers. Towards the end of this paper, we explore previously unexamined quiver enhancements of other enhancements.

\subsection*{Acknowledgements}
This work was partially supported by the AMS–Simons Travel Grant, which enabled the first and second authors to have many helpful conversations with Sam Nelson.

\section{Preliminaries}\label{sec:prelims}
\subsection{Biquandles}
In this section, we review the definitions of biquandles and relevant enhancements. 


\begin{defn}\label{def:biq}
    A \textit{biquandle} is a triple $(X,\overline{\triangleright}, \underline{\triangleright})$, where $X$ is a set and  binary operations satisfying the following properties.
\begin{enumerate}
    \item[(i)] For all $x\in X$, $x \overline{\triangleright} x=x\underline{\triangleright} x$,
    \item[(ii)]  The maps $\alpha_y, \beta_y : X \rightarrow X$ and $S : X \times X \rightarrow X \times X$ defined by $\alpha_y(x) = x \overline{\triangleright} y, \beta_y(x) = x \underline{\triangleright}  y$ and
$S(x, y) = (y \overline{\triangleright}x, x \underline{\triangleright} y)$ are invertible,
\item[(iii)] For all $x,y,z\in X$, $(x\overline{\triangleright} y)\overline{\triangleright} (z\overline{\triangleright}  y) = (x\overline{\triangleright} z)\overline{\triangleright}(y\underline{\triangleright} z)$ \\ $(x\underline{\triangleright} y)\overline{\triangleright} (z\underline{\triangleright}  y) = (x\overline{\triangleright}z)\underline{\triangleright} (y\overline{\triangleright} z)$ \\ $(x\underline{\triangleright}y)\underline{\triangleright}(z\underline{\triangleright} y) = (x\underline{\triangleright}z)\underline{\triangleright}(y\overline{\triangleright}  z)$,
\end{enumerate}
\end{defn}
\begin{exmp}
   A biquandle with operations  $x\overline{\triangleright} y=x$ is called a \textit{quandle}. In this case, one often uses the notation $x\underline{\triangleright} y= x\triangleright y$.
\end{exmp}
\begin{exmp}
    The \textit{dihedral quandle} of order $n$, denoted by $R_n$, is a set $\{1,2,\ldots, n \}$ equipped with the operation $x\triangleright y = 2y-x \mod n.$
\end{exmp}

For finite biquandles, it is often convenient to record a biquandle as a list of lists of lists. For instance, the list of lists of lists below represents a biquandle structure on the set $\{1,2,3,4\}$ where the blue list of lists keeps track of $\overline{\triangleright}$ and the black list of lists keeps track of $\underline{\triangleright}$. The first blue list means that $1\overline{\triangleright}1=2, 1\overline{\triangleright}2=3, 1\overline{\triangleright}3=1,$ and $1\overline{\triangleright}4=4$.

\begin{align*}
    (\textcolor{blue}{((2, 3, 1, 4), (3, 2, 4, 1), (4, 1, 3, 2), (1, 4, 2, 3))},\\
   ((3, 1, 2, 4), (4, 2, 1, 3), (2, 4, 3, 1), (1, 3, 4, 2)))
\end{align*}

Various examples of infinite biquandles arise as fundamental biquandles of knotted objects. We define a \textit{semiarc} to be an edge in the virtual link diagram where we consider each crossing as a 4-valent vertex.

\begin{defn}
   Given an oriented virtual link diagram $D$ representing the virtual link $L$, we label the semiarcs of $D$ as $x_1,x_2\cdots, x_n$. The \textit{fundamental biquandle} $X_L$ has a presentation with generators $x_1,\cdots x_n$ and relations as shown in Figure \ref{fig:quancross} for each crossing and vertex.
\end{defn}
\begin{figure}[ht!]
\labellist
\small\hair 2pt
\pinlabel $y$ at -1 63
\pinlabel $x$ at -6 3
\pinlabel $z=y\overline{\triangleright}x$ at 85 -3
\pinlabel $w=x\underline{\triangleright}y$ at 85 75
\pinlabel $x$ at 141 63
\pinlabel $y$ at 136 3
\pinlabel $z=x\underline{\triangleright}y$ at 225 -3
\pinlabel $w=y\overline{\triangleright}x$ at 225 75
\endlabellist
\includegraphics[width=9cm]{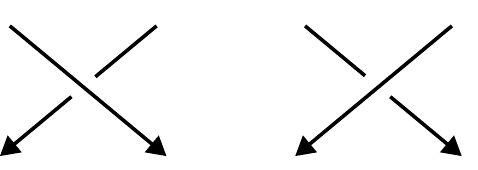}
\centering
\caption{(Left) Each crossing of negative type adds the relations $z=y\overline{\triangleright}x$ and $w=x\underline{\triangleright}y$. (Right) Each crossing of positive type adds the relations $z=y\overline{\triangleright}x$ and $w=x\underline{\triangleright}y$.}\label{fig:quancross}
\end{figure}

\begin{defn}
    Let $X, Y$ be biquandles. A map $f : X \rightarrow Y$ is a \textit{biquandle homomorphism} if for all $x, y \in X$, $f(x \underline{\triangleright} y) = f(x) \underline{\triangleright} f(y)$ and $f(x \overline{\triangleright} y) = f(x) \overline{\triangleright} f(y)$ . A biquandle \textit{endomorphism} is a biquandle homomorphism from a biquandle to itself.
\end{defn}

Let $\Hom(X,Y)$ denote the set of biquandle homomorphisms from $X$ to $Y$. We also denote $|\Hom(X_L,Y)|$ by $Col_Y(L)$. A labeling of $D$ by elements of a finite biquandle $Y$ such that the labelings near each crossing satisfy the rule in Figure \ref{fig:quancross} then represents an element of $\Hom(X,Y)$.

\subsection{Bridge indices} \label{subsec:bridge}

There are two natural ways to define the bridge index of virtual knot. An \textit{overpass} of a virtual knot diagram is a subarc that goes over at least one crossing
but never goes under a crossing. A \textit{maximal overpass} is an overpass that
could not be made any longer. The \textit{first bridge index} $b_1(K)$ of a virtual knot $K$ is the minimum number of maximal overpasses over all diagrams $D$ representing $K$. 

For the second definition, we consider the generic standard height function $h:\mathbb{R}^2\rightarrow\mathbb{R}$ restricted to the virtual knot diagram. The \textit{second bridge index} $b_2(K)$ of a virtual knot $K$ is the minimum number of local maxima for $h$ over all diagrams $D$ representing $K$. 

Motivated by \cite{blair2020wirtinger}, the second author proved that the first bridge index can be defined in another way \cite{pongtanapaisan2019wirtinger}.

A diagram $D$ is said to be \textit{partially colored} if we have specified a subset $A$ of the strands of $D$ and a function $f : A \rightarrow \{c_1,c_2,\cdots,c_n\}$. We refer to this partial coloring by $(A, f)$. We also say that two strands $s_i$ and $r_i$ of $D$ are \textit{adjacent} if $s_i$ and $r_i$ are the
understrands of some crossing in $D$. Given partial colorings $(A_1, f_1)$ and
$(A_2, f_2)$ of $D$, we say $(A_2, f_2)$ is the result of a \textit{coloring move} on $(A_1, f_1)$ if
\begin{enumerate}
    \item $A_1\subset A_2$ and $A_2 \backslash A_1$ is a collection of strands $\{s_1,s_2,\cdots,s_m\}$
    \item $\left.f_2\right|_{A_1}=f_1$
    \item For $j\in \{1,2,\cdots,m \},$ each $s_j$ is adjacent to a strand $r_j$ at crossings $c_j$, and $r_j \in A_1$.
    \item The overstrand $t_j$ at the crossing $c_j$ is an element of $A_1$.
    \item $f_2(s_j)=f_1(r_j)$.
\end{enumerate}
We sometimes refer to $(A_0, f_0) \rightarrow (A_1, f_1) \rightarrow \cdots \rightarrow (A_n, fn)$ as a \textit{Wirtinger coloring sequence}.
\begin{figure}[ht!]
\includegraphics[width=6cm]{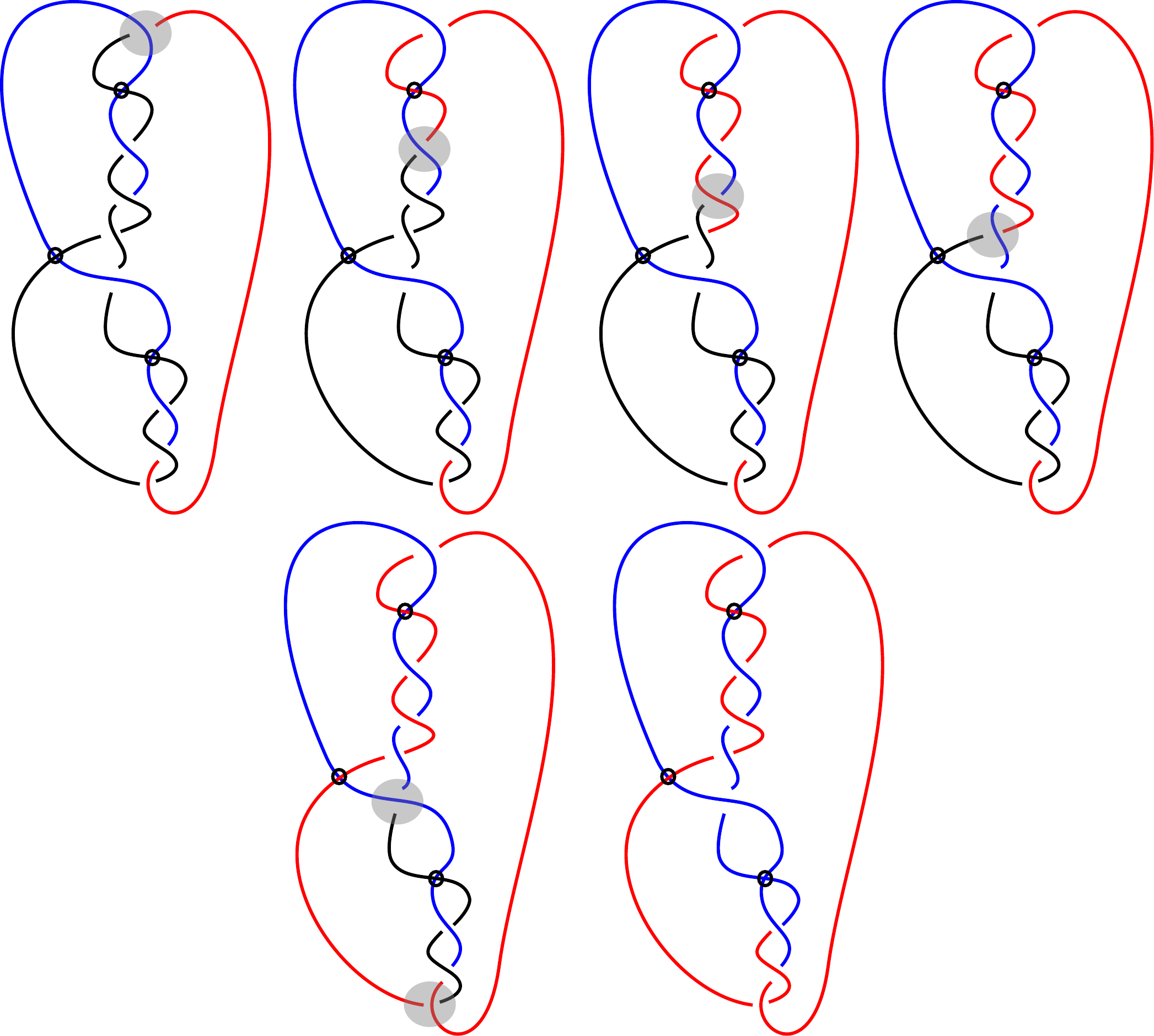}
\centering
\caption{A complete Wirtinger coloring sequence demonstrated on $\kappa_{2,0}$.}\label{fig:wirtseq}
\end{figure}

\begin{figure}[ht!]
\includegraphics[width=6cm]{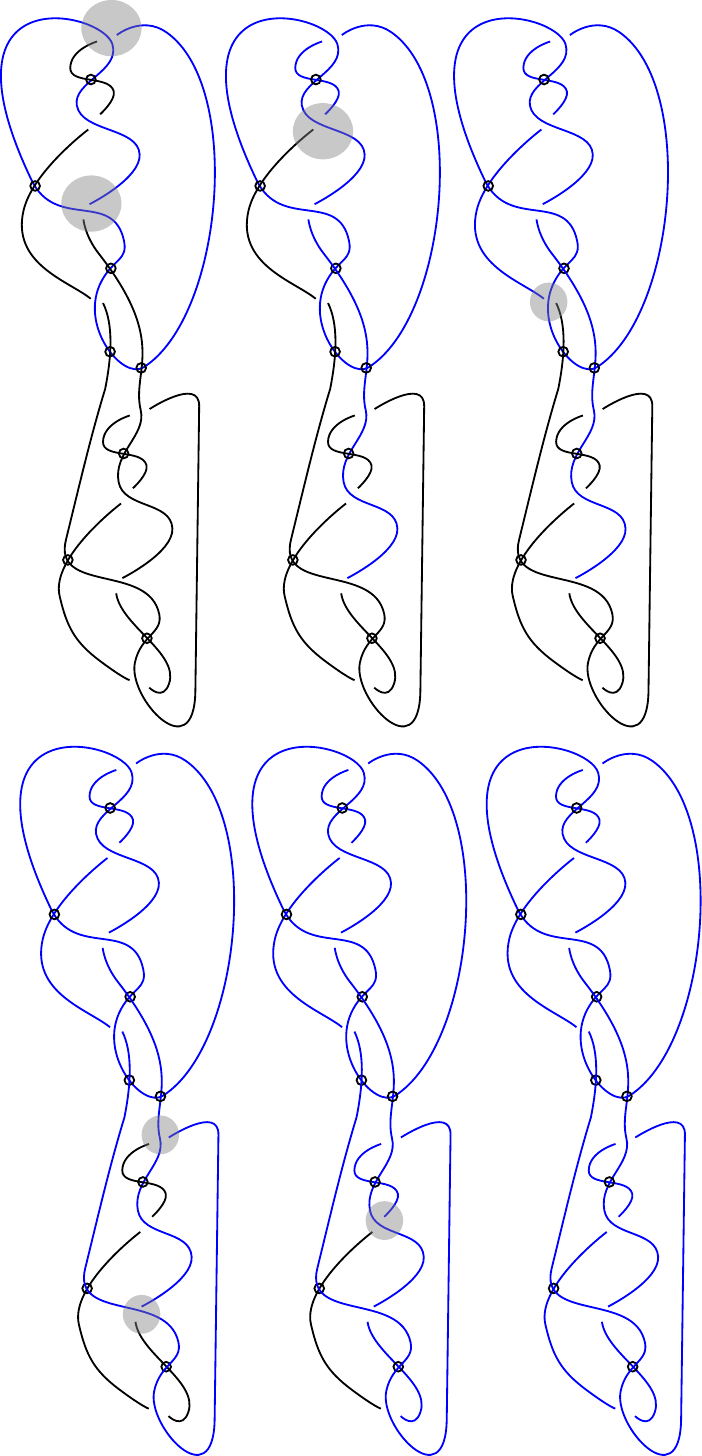}
\centering
\caption{The diagram has more than one overpasses, but one seed generates a complete Wirtinger coloring sequence, so $b_1(\kappa)=1$.}\label{fig:1seed}
\end{figure}

This process is demonstrated in Figure \ref{fig:wirtseq}. In the figure, there are six stages. In stage one, we have a diagram $D_1$, where two strands receive colors. The initial colored strands are also called the \textit{seeds}. The crossings where coloring moves are being performed are marked by transparent gray circles. Results in \cite{pongtanapaisan2019wirtinger} implies that $b_1(K)\leq 2$ in Figure \ref{fig:wirtseq}.

\subsection{Relating bridge numbers with quandles}

The following observation has been mentioned in various papers, but we provide the proof again here for completeness. 

\begin{prop}\label{prop:bridge}
    Let $X$ be a finite quandle. Then, $Col_X(L) = |X|^{b_1(L)}$, where $b_1(L)$ is the first bridge index of $L$.
\end{prop}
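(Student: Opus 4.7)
The plan is to build an explicit bijection $X^{b_1(L)} \leftrightarrow Col_X(L)$ using the Wirtinger coloring sequence developed in Subsection \ref{subsec:bridge}. Fix a diagram $D$ of $L$ realizing $b_1(L) = k$, together with seeds $s_1,\dots,s_k$ whose Wirtinger coloring sequence $(A_0,f_0)\to\cdots\to(A_N,f_N)$ reaches every strand of $D$.

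For the upper bound $Col_X(L) \leq |X|^k$, the first step is to define the restriction map $\rho : Col_X(L) \to X^k$ by $\rho(f) = (f(s_1),\dots,f(s_k))$ and prove injectivity by induction along the sequence. At each coloring move a new strand $s$ is introduced alongside an already-colored adjacent strand $r$ and an overstrand $t$; the quandle crossing rule $f(s) = f(r)\triangleright f(t)$, combined with the invertibility of $\triangleright$ provided by axiom (ii), forces $f(s)$ uniquely from $f(r)$ and $f(t)$. Iterating along the sequence, every valid coloring is determined by its values on the seeds, so $\rho$ is injective and $Col_X(L) \leq |X|^k$.

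For the lower bound, the plan is to construct an inverse $\Phi : X^k \to Col_X(L)$: given $(c_1,\dots,c_k)$, assign $f(s_i) = c_i$ and propagate along the sequence by setting $f(s) = f(r)\triangleright f(t)$ at each coloring move. This yields a function on every strand that, by construction, satisfies the quandle relation at each crossing visited by the sequence. The delicate step is verifying the relation at each crossing \emph{not} visited by the sequence: the colors on the three strands meeting there are themselves iterated $\triangleright$-words in $c_1,\dots,c_k$, and one must reduce the would-be relation to a tautology using self-distributivity (axiom (iii)) together with axiom (i).

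The main obstacle is precisely this last consistency check. At its heart, the step asserts that the Wirtinger coloring sequence furnishes a complete presentation of the fundamental quandle $Q(L)$ on the seeds, so that all crossing relations, used or unused, follow from the sequence's relations via the quandle axioms. I expect to handle it by adapting the presentation-theoretic machinery of \cite{pongtanapaisan2019wirtinger} that underpins the bridge-index characterization, combined with direct Reidemeister-style reductions applied to each unused crossing in turn.
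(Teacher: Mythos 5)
Your upper-bound argument (the restriction map $\rho\colon Col_X(L)\to X^{b_1(L)}$ to the seeds is injective, by propagating forced colors along the Wirtinger coloring sequence using invertibility of $\triangleright$) is correct, and it is essentially all that the paper's own one-line proof contains. The problem is with your lower bound, and the "main obstacle" you flag there is not a technical hurdle you can clear with presentation-theoretic machinery --- it is a genuine failure. The Wirtinger coloring sequence does \emph{not} furnish a relation-free presentation of the fundamental quandle on the seeds: the crossings not used by the sequence impose honest constraints, and a generic assignment of colors to the seeds will violate them. Concretely, the figure-eight knot has $b_1=2$, yet $Col_{R_3}(4_1)=3\neq 3^2$: only the three constant colorings survive the unused-crossing relations. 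So the map $\Phi$ you propose is not well defined in general, and no amount of self-distributivity reductions will make it so.

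The fault is not entirely yours: the proposition as printed asserts an equality, but it is false as stated, and the paper itself quietly treats it as the inequality $Col_X(L)\leq |X|^{b_1(L)}$ in later sections (it speaks of links for which ``the bound in Proposition \ref{prop:bridge} is sharp'' and of links for which it ``is not sharp,'' which would be vacuous for an equality). You should prove only the inequality: a quandle coloring is constant along each overpass (since $x\,\overline{\triangleright}\,y=x$ for a quandle), hence is determined by its values on the $b_1(L)$ maximal overpasses of a minimal diagram, which is exactly your injectivity argument. Drop the construction of $\Phi$ entirely.
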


\begin{proof}
    Labelings of local maxima generate labelings of the entire knot diagram via the quandle coloring rule. The quantity $Col_X(L)$ counts the labelings that are consistent, which means that the rule $z=x\triangleright y$ in Figure \ref{fig:quancross} is satisfied at each crossing.
\end{proof}
Analogous arguments can be made for biquandles and the second bridge index. This relationship was explored in \cite{vo2024learning}.

\begin{prop}\label{prop:bridge2}
     Let $X$ be a finite biquandle. $Col_X(L) = |X|^{b_2(L)}$, where $b_2(L)$ is the second bridge index of $L$.
\end{prop}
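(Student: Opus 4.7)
The plan is to mirror the proof of Proposition \ref{prop:bridge} closely, replacing the quandle/$b_1$ picture by the biquandle/$b_2$ picture. I would begin by fixing a diagram $D$ of $L$ that realizes $b_2(L)$, so that with respect to the standard height function, $D$ has exactly $b_2(L)$ local maxima. At each local maximum, there is a single semiarc at the top from which propagation will begin, giving a distinguished set of $b_2(L)$ semiarcs, say $m_1, \ldots, m_{b_2(L)}$.

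The first step is to show that any homomorphism $f \in \Hom(X_L, X)$ is uniquely determined by the tuple $(f(m_1), \ldots, f(m_{b_2(L)}))$. Following the local rule in Figure \ref{fig:quancross}, at any crossing the labels on the two outgoing semiarcs are determined by the labels on the two incoming semiarcs via the operations $\overline{\triangleright}$ and $\underline{\triangleright}$. Descending through the diagram from the maxima and filling in semiarcs crossing by crossing, one produces a label for every semiarc of $D$, so $f$ is completely recovered. This already yields $Col_X(L) \le |X|^{b_2(L)}$.

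The second step, which is the heart of the equality, is to show that every assignment $(a_1, \ldots, a_{b_2(L)}) \in X^{b_2(L)}$ of initial labels actually extends to a genuine biquandle coloring of $D$, i.e.\ a valid element of $\Hom(X_L, X)$. The idea is to carry out the propagation in the top-to-bottom order dictated by the height function and argue that the biquandle axioms of Definition \ref{def:biq} force global consistency: the invertibility of $\alpha_y$, $\beta_y$ and $S$ in axiom (ii) ensures that propagation through each crossing is a well-defined bijection on label pairs; axiom (iii) guarantees that when semiarcs descending from different maxima meet at a common crossing, the two ways of reading the crossing yield the same labels; and axiom (i) guarantees consistency where a single strand closes up at a minimum. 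Combined, these give a well-defined $f \in \Hom(X_L, X)$ for any choice of $a_1, \ldots, a_{b_2(L)}$, and hence $Col_X(L) \ge |X|^{b_2(L)}$.

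The main obstacle is precisely this second step: one must verify that the propagated labels never conflict, in particular that the strands close up consistently at the local minima of $D$. Unlike in the quandle case (where overstrands are preserved intact and only finitely many relations appear at crossings), the biquandle operations alter the label along a strand at every crossing, so the closure condition at each minimum is a nontrivial compatibility requirement. The crux of the argument is to reduce this global closure condition to repeated local applications of axioms (i)--(iii) at the successive crossings encountered along each descending strand, which is the analogue of the argument for quandles in Proposition \ref{prop:bridge}.
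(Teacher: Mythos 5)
Your first step is exactly the paper's (very terse) argument: descending from the $b_2(L)$ local maxima, the invertibility of $S$ in axiom (ii) propagates labels through every crossing, so a coloring is determined by its values on the semiarcs containing the maxima, giving $Col_X(L)\le |X|^{b_2(L)}$. That part is correct and is all the paper establishes or ever uses.

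Your second step, however, is a genuine error: it is simply not true that every assignment of seed labels to the maxima extends to a consistent coloring, and no amount of local application of axioms (i)--(iii) will make it so. Those axioms encode invariance under the Reidemeister moves; they say nothing about the global closure constraints at the local minima, which are honest equations that generically fail. Concretely, take $X=R_3$ regarded as a biquandle (with $x\,\overline{\triangleright}\,y=x$) and $L$ the figure-eight knot: $b_2(L)=2$ but $Col_X(L)=3\ne 3^2$, since the figure-eight is not $3$-colorable. So the equality asserted in Proposition~\ref{prop:bridge2} (and likewise in Proposition~\ref{prop:bridge}) is false as literally stated; the intended statement is the inequality $Col_X(L)\le |X|^{b_2(L)}$. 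The paper itself confirms this reading: later it speaks of links for which ``the bound in Proposition~\ref{prop:bridge} is sharp'' versus ones for which it is not, Lemma~\ref{lem:main} is phrased conditionally on the count being \emph{exactly} $|X|^{b_2(L)}$, and in the proof of Theorem~\ref{thm:main} the inequality is used in the direction $Col_T(\kappa)\le 4^{b_2(\kappa)}$ to bound $b_2$ from below. You correctly flagged the closure at the minima as ``the main obstacle,'' but the right conclusion is that this obstacle is insurmountable and only the upper bound holds; you should drop the second step and, ideally, restate the proposition with $\le$.
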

We will also use the following fact which can be found in many existing works. See \cite{khandhawit2024quandle} for example. Recall that a collection of propertly embedded arcs $T$ in a 3-ball $B^3$ is called a \textit{trivial tangle} if it
contains no closed curve components and $T$ can be moved to arcs on $\partial B^3$
by an isotopy rel $\partial T$. It is well known that a two-string trivial tangle has an associated rational number $p/q.$ In fact, a \textit{rational tangle} is another name for a 2-string trivial tangle. It is also well-known that $|p|$ is equal to another invariant called the determinant.

\begin{prop}[Corollary 3.3 of \cite{khandhawit2024quandle}]
   If $L$ is a 2-bridge knot, then
   \begin{align*}
       |Hom(X_L,R_n)|=n\cdot gcd(\Delta,n), 
   \end{align*}
   where $\Delta$ is the determinant of the knot.\label{prop:propagate}
\end{prop}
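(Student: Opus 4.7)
The plan is to count quandle homomorphisms $X_L \to R_n$ directly from a $2$-bridge diagram of $L$. Since $b_1(L) = 2$ and $R_n$ is a quandle (a biquandle with trivial $\overline{\triangleright}$), the same propagation reasoning used in Proposition \ref{prop:bridge} shows that a coloring is completely determined by the labels $a, b \in \mathbb{Z}/n\mathbb{Z}$ assigned to the two maximal overarcs of the $2$-bridge diagram. This gives at most $n^2$ candidate colorings, and the task is to measure how many actually satisfy the crossing relations when one closes up the diagram.

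Next, I would exploit the affine structure of $R_n$: since $x \triangleright y = 2y - x \bmod n$, every other arc in the diagram receives a label of the form $b + k_i\, c$, where $c := a - b$ and the integer coefficients $k_i$ are determined by the Wirtinger coloring sequence rooted at the two seeds. Thus the coloring is parametrized by the pair $(b, c) \in \mathbb{Z}/n\mathbb{Z} \times \mathbb{Z}/n\mathbb{Z}$, with $b$ acting as a free translation parameter that contributes no obstruction.

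The heart of the argument is to show that the consistency condition from closing up the diagram reduces to the single linear relation $\Delta \cdot c \equiv 0 \pmod{n}$. For a $2$-bridge knot this can be established by rational tangle calculus: the $2$-bridge knot is the numerator closure of a rational $p/q$ tangle with $|p| = \Delta$, and such a tangle acts on pairs of endpoint labels in $R_n$ by an explicit linear map whose relevant entries are controlled by the continued fraction expansion of $p/q$; the closure identifies opposite endpoints and produces precisely the relation $\Delta \cdot c \equiv 0 \pmod{n}$. An alternative route uses the classical identity $\mathrm{Col}_{R_n}(L) = n \cdot |H_1(\Sigma_2(L); \mathbb{Z}/n\mathbb{Z})|$ together with the fact that the double branched cover $\Sigma_2(L)$ of a $2$-bridge knot is a lens space $L(p,q)$, so that $H_1(\Sigma_2(L);\mathbb{Z}) \cong \mathbb{Z}/\Delta\mathbb{Z}$ and tensoring with $\mathbb{Z}/n\mathbb{Z}$ gives $\mathbb{Z}/\gcd(\Delta, n)\mathbb{Z}$.

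Counting solutions of $\Delta \cdot c \equiv 0 \pmod{n}$ yields $\gcd(\Delta, n)$ choices for $c$, and for each such $c$ there are $n$ choices for $b$, giving $|\Hom(X_L, R_n)| = n \cdot \gcd(\Delta, n)$ as claimed. The main obstacle is the third step---identifying the closure relation explicitly with $\Delta \cdot c \equiv 0 \pmod{n}$---which requires either a careful diagrammatic bookkeeping of the linear action of each twist region in the rational tangle or, more cheaply, the structural input that the double branched cover of a $2$-bridge knot is a lens space.
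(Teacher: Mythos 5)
The paper does not prove this proposition at all: it is quoted verbatim as Corollary 3.3 of \cite{khandhawit2024quandle} and used as a black box, so there is no internal argument to compare yours against. On its own merits, your proof is correct and is the standard one. The reduction to the two bridge labels \(a,b\), the affine reparametrization by \((b,c)\) with \(c=a-b\) (valid because \(x\triangleright y=2y-x\) is affine, so every propagated label has the form \(b+k_ic\) with the translation part unobstructed), and the final count \(n\cdot\gcd(\Delta,n)\) are all right. The only place where the argument is not self-contained is the identification of the closure constraint with \(\Delta\cdot c\equiv 0\pmod n\); of your two proposed routes, the tangle-calculus one is only sketched (one must check that the relevant matrix entry of the linear action is exactly \(\pm p\) with \(|p|=\Delta\), not a proper divisor of it), whereas the double-branched-cover route genuinely closes the gap: Fox's theorem gives \(\mathrm{Col}_{R_n}(L)=n\cdot|\mathrm{Hom}(H_1(\Sigma_2(L)),\mathbb{Z}/n)|\), and \(\Sigma_2\) of a \(2\)-bridge knot is a lens space with \(H_1\cong\mathbb{Z}/\Delta\), so the count is \(n\cdot\gcd(\Delta,n)\) immediately. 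Note that the second route makes the first three steps unnecessary rather than merely supporting them; either way the statement is established.
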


Recall that a result of tying rational tangles in a circular fashion like a necklace is called a Montesinos link $M(p_1/q_1,\cdots,p_n/q_n)$. Figure \ref{fig:Monte} shows a Montesinos link $M(a/b,c/d,e/f)$ of length three. A \textit{pretzel link} is a Montesinos link where all the tangles involved are integral tangle.

\begin{figure}[ht!]
\includegraphics[width=6cm]{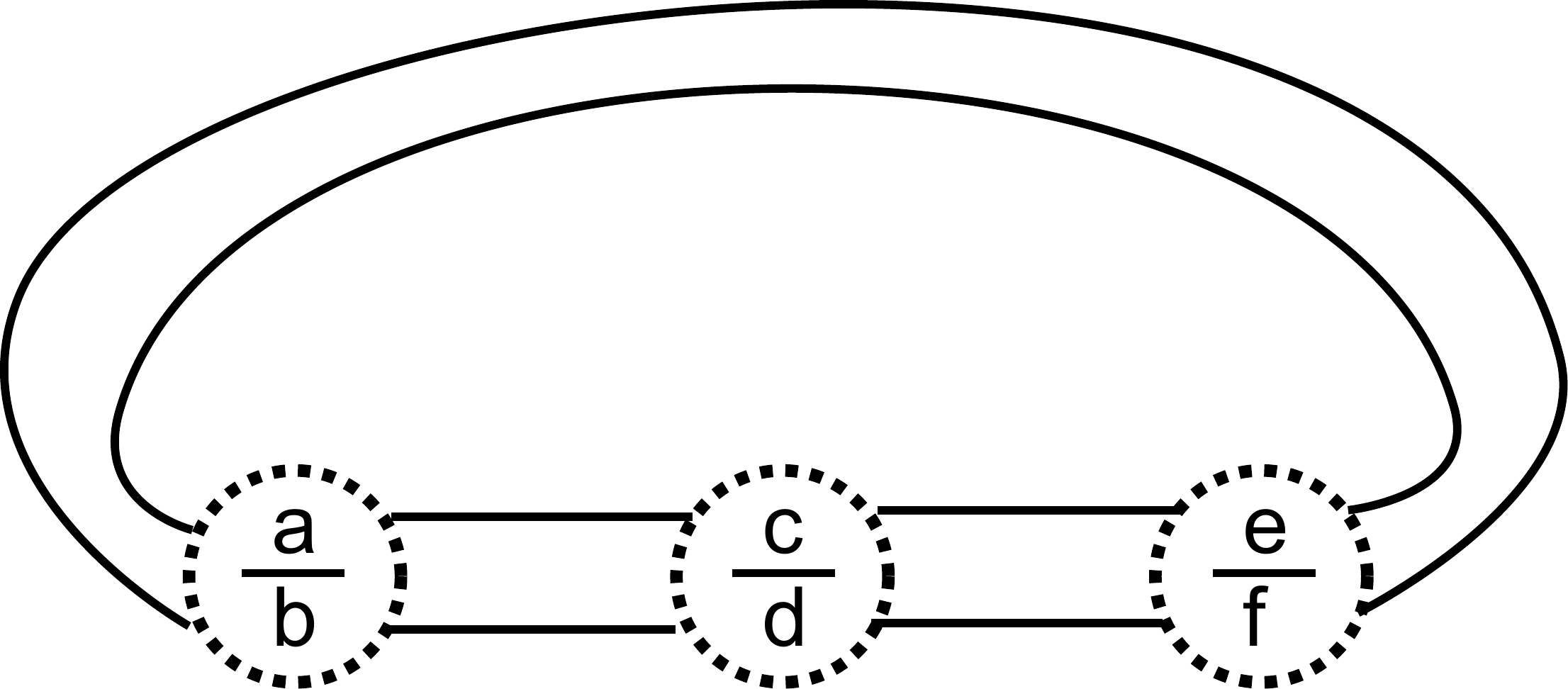}
\centering
\caption{A Montesinos link.}\label{fig:Monte}
\end{figure}

\begin{figure}[ht!]
\labellist
\small\hair 2pt
\pinlabel $3$ at 49 120
\endlabellist
\includegraphics[width=3cm]{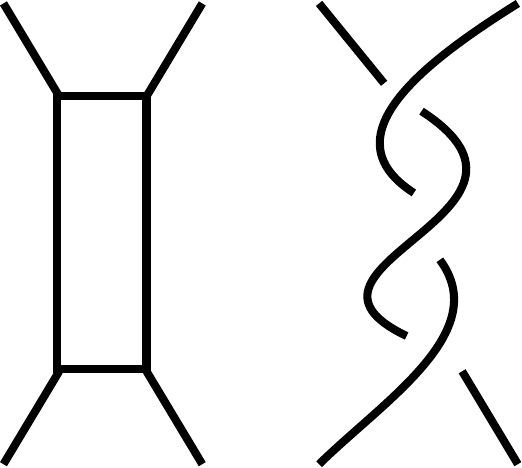}
\centering
\caption{A twist box means a string of crossings.}\label{fig:twist}
\end{figure}

\subsection{Enhancements}

\subsubsection{Biquandle coloring quivers}
\begin{defn}
    Given a knot $L$, its fundamental biquandle $X_L$, another finite biquandle $Y$, and a subset $S$ of $\Hom(Y,Y)$. The \textit{biquandle coloring quiver} $\mathcal{Q}^{S}_Y(L)$ is a directed graph whose vertices are elements of $\Hom(X_L,Y)$ and there is an edge from vertex $v$ to another vertex $w$ if there is an element $\varphi$ of $S$ such that $\varphi(v) = w.$
\end{defn}

Observe that \( \mathcal{Q}^S_X(L) \) is determined up to isomorphism by \( Y \) and \( \text{Hom}(X_L, Y) \). Hence, the biquandle coloring quiver is an invariant of \( L \). Comparing and contrasting two oriented graphs can still be challenging. A simpler way to distinguish two quivers in our setting is to record the degrees of each vertex. Furthermore, each vertex has the same number of edges oriented out of it, but the number of edges oriented into a vertex $v$, denoted by $deg^+(v)$ provides interesting data. This information can be systematically stored in a polynomial.

\begin{defn}
    Let $V$ denote the set of vertices for $\mathcal{Q}^S_
X(L)$. The \textit{in-degree polynomial} is defined to be $\displaystyle \sum_{v\in V} u^{deg^+(v)}$.
\end{defn}

The following theorem will be useful in this paper.

\begin{thm}[\cite{taniguchi2021quandle}]
    Let $D$ and $D'$ be oriented knot diagrams representing knots $L$ and $L'$, respectively. Let $P = p_1p_2 \cdots p_n$ be a positive integer for some distinct primes $p_1, \cdots , p_n$. For any $S \subset Hom(R_P,R_P)$, the quandle coloring quivers $Q^S_{R_P}
(L)$ and $Q^S_{R_P}
(L')$ are isomorphic if and only
if $|Hom(X_L,R_P)| = |Hom(X_{L'},R_P)|$. \label{thm:Taniguchi}
\end{thm}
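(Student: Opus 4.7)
The plan is to prove both directions. The forward implication is immediate: the vertex set of $Q^S_{R_P}(L)$ is $\Hom(X_L, R_P)$, so isomorphic quivers have equal numbers of vertices. For the converse, it suffices to construct an $\text{End}(R_P)$-equivariant bijection $\Hom(X_L, R_P) \to \Hom(X_{L'}, R_P)$, since $S \subseteq \text{End}(R_P)$ acts on both sides by post-composition.

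The first major step is a Chinese Remainder reduction. Because $P = p_1 \cdots p_n$ is squarefree, the isomorphism $\Z/P \cong \prod_i \Z/p_i$ carries the dihedral operation componentwise, giving $R_P \cong R_{p_1} \times \cdots \times R_{p_n}$ as quandles, and hence $\Hom(X_L, R_P) \cong \prod_i \Hom(X_L, R_{p_i})$. A key lemma I would prove is that for distinct primes $p \neq q$, every quandle homomorphism $g \colon R_q \to R_p$ is constant. Writing $h(y) = g(y) - g(0)$ and viewing $h$ as a function $\Z \to \mathbb{F}_p$ of period $q$, the relations $h(2y) = 2h(y)$ and $h(2y+1) = 2h(y) + h(1)$ (obtained by plugging $x = 0$ and $x = -1$ into $h(2y - x) = 2h(y) - h(x)$) force $h(n) \equiv n \cdot h(1) \pmod p$ by induction on $n$; combining $h(q) = h(0) = 0$ with $\gcd(p,q) = 1$ gives $h(1) = 0$, hence $h \equiv 0$. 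This ``orthogonality'' yields $\text{End}(R_P) \cong \prod_i \text{End}(R_{p_i})$ with componentwise action. Since $|\Hom(X_L, R_p)|$ is always a power of $p$, unique factorization upgrades the hypothesis to $|\Hom(X_L, R_{p_i})| = |\Hom(X_{L'}, R_{p_i})|$ for every $i$.

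The second step handles one prime at a time. For odd $p$, every endomorphism of $R_p$ is affine, $f(x) = ax + b$ with $a, b \in \mathbb{F}_p$. Fixing a basepoint arc decomposes $\Hom(X_L, R_p) \cong \mathbb{F}_p \oplus V_L$, where $V_L$ is the $\mathbb{F}_p$-vector space of pointed colorings sending the basepoint to $0$, and $f$ acts by $(c_0, v) \mapsto (a c_0 + b, a v)$. Since equality of counts forces $\dim V_L = \dim V_{L'}$, any $\mathbb{F}_p$-linear isomorphism $\phi_p \colon V_L \to V_{L'}$ produces an $\text{End}(R_p)$-equivariant bijection $\psi_p(c_0, v) = (c_0, \phi_p(v))$; the case $p = 2$ is trivial because $R_2$ is the trivial quandle and $|\Hom(X_L, R_2)| = 2$ for every knot. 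Assembling $\psi = \prod_i \psi_{p_i}$ under the product decomposition gives the required $\text{End}(R_P)$-equivariant bijection, and hence $Q^S_{R_P}(L) \cong Q^S_{R_P}(L')$. The main obstacle I anticipate is the cross-prime ``constant-only'' lemma, which is the decisive input letting us split both $\Hom(X_L, R_P)$ and $\text{End}(R_P)$ into prime components; once that is established, the affine rather than linear nature of the $\text{End}(R_p)$-action is accommodated by having $\psi_p$ act as the identity on the basepoint coordinate, so that both translations and scalings commute through.
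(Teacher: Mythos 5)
The paper does not actually prove this statement---it is imported verbatim from Taniguchi's paper as Theorem \ref{thm:Taniguchi} and used as a black box---so there is no in-paper argument to compare against. Judged on its own merits, your proof is correct and is essentially the standard argument (and close in spirit to Taniguchi's original): reduce via CRT using squarefreeness, observe that $|\Hom(X_L,R_{p_i})|$ is a $p_i$-power so that equal total counts force equal counts prime by prime, and then build an $\mathrm{End}(R_P)$-equivariant bijection from linear isomorphisms of the based coloring spaces, which handles every subset $S$ at once. Two small steps are left implicit but are easily filled: (i) in the cross-prime lemma, $h(2y+1)=2h(y)+h(1)$ requires first noting $h(-x)=-h(x)$ (set $y=0$ in $h(2y-x)=2h(y)-h(x)$) so that $h(-1)=-h(1)$; and (ii) deducing $\mathrm{End}(R_P)\cong\prod_i\mathrm{End}(R_{p_i})$ from the constancy of homomorphisms $R_q\to R_p$ needs the remark that for fixed $c$ the slice $\{c\}\times R_{p_k}$ is a subquandle of the product (by idempotency $c\,\triangleright\,c=c$), so that a homomorphism out of the product restricts to a quandle homomorphism in each coordinate separately---alternatively, one can skip the lemma entirely by running your same induction on an arbitrary endomorphism of $R_P$ to show it is affine, $x\mapsto ax+b$, which is the form the paper itself quotes in its remark after Theorem \ref{thm:1}, and then split $(a,b)$ by CRT. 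Your analysis also makes transparent why squarefreeness is essential: over $\Z/p^2$ the coloring module is no longer determined by its cardinality, which is exactly the failure the paper exploits later with $R_{p^2}$ to produce proper enhancements.
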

\subsubsection{Column group enhancements}

We now discuss another type of enhancement that we will experiment with in the final section of the paper. Readers are encouraged to consult \cite{hennig2012column} for more details. As mentioned in the previous subsection, a quandle can be represented as a matrix \(M\). Due to the quandle rules, each column is a permutation. For instance, for the matrix in the previous section, column 1 is the permutation
\[
1 \mapsto 1,\quad 2 \mapsto 3,\quad 3 \mapsto 2.
\]

Now, treat each vertex as a quandle-colored diagram. The labels of that diagram generate a subquandle of the original quandle. Each element of such a subquandle corresponds to a column of \(M\) and hence to a permutation. We can then record the order of the subgroup of the permutation group generated by these columns. This yields a multiset of cardinality \(Col_X(L)\), where each entry records the order of the corresponding subgroup. As usual, a multi-set invariant can be turned into a polynomial invariant by making the elements the exponents of a variable $u$. The multiplicities are turned into coefficients. For instance, $\{0,1,1,2,2,2,3,3,3,3,3\}$ corresponds to the polynomial $1+2u+3u^2+5u^3$.

We will demonstrate this enhancement with an example in the final section.

\section{Proof of Theorem \ref{thm:main}}
We now prove our main result. Define $\kappa_{m,n}$ to be the connected sum of $m-1$ copies of $L$ and $n-m$ copies of $J$ as shown in Figure \ref{fig:k35}. We remark that for virtual knots, the connected sum depends on the location where the operation takes place. In Figure \ref{fig:k35}, we show the connected sum on $\kappa_{3,5},$ but for the general $\kappa_{m,n},$ we connect sum on a short arc that receives the color last in the Wirtinger coloring sequence of one summand with a seed of the other summand (see Subsection \ref{subsec:bridge} for a reminder of these definitions). This is to ensure that $b_1(K)$ does not increase when we do not want it to.

\main*
\begin{proof}
    \textbf{Claim 1:} $b_1(\kappa_{m-1,n})=m.$

To see that $b_1(\kappa_{m,n})\geq m.$ Observe that the virtual knot $L$ is tricolorable. Translating in terms of quandles, this means that $L$ admits nine colorings by the dihedral quandle of order 3. Using the connected sum formula found in \cite{clark2016quandle}, we see that $Col_{R_3}(\#^{m-1}L) = 3^{m}\leq 3^{b_1(L)}$.

On the other hand, $b_1(\kappa_{m-1,n})\leq m$ because there are $m$ seeds and one can use the main result in \cite{pongtanapaisan2019wirtinger} to search for a diagram for $\kappa_{m-1,n}$ with $m$ overpasses. The seeds are illustrated in Figure \ref{fig:k35} for a particular pair $(m,n)=(3,5),$ but in general, we need one seed from the $J$ summand (this seed is also shown in Figure \ref{fig:1seed}). In fact, this seed propagates through all the $J$ summands. Now, for each $L$ summand, we need one additional seed at a local maximum. This is illustrated in red and green in Figure \ref{fig:k35}. By the instruction of how we do connected sum (last strand that receives Wirtinger color of the previous summand connects to the first strand that receives Wirtinger color), we can be sure that $m$ seeds gives a complete Wirtinger coloring sequence.

 \textbf{Claim 2:} $b_2(\kappa_{m-1,n})=n.$

It is easier to see that $b_2(\kappa_{m-1,n})\leq n$ since we are stacking the connected sums vertically, making sure that $b_2$ increases by one for each summand. More precisely, each time the connected sum operation, we lose one local maximum and one local minimum, giving the classical Schubert bounds $b_2(K_1\# K_2)=b_2(K_1)+b_2(K_2)-1$  \cite{schultens2003additivity}.

For the other inequality, consider the biquandle 
\begin{align*}
    T=(((1,3,4,2), (3, 1,2,4), (2,4,3,1),(4,2,1,3)), \\ ((1,4,2,3), (2,3,1,4), (4,1,3,2),(3,2,4,1)))
\end{align*}

 There are 16 biquandle colorings for $J$, which are listed here:
\begin{table}[h!]
\centering
\begin{tabular}{|c|c|c|c|c|c|c|c|}
\hline
\textbf{a} & \textbf{b} & \textbf{c} & \textbf{d} & \textbf{e} & \textbf{f} & \textbf{g} & \textbf{h} \\
\hline
1 & 1 & 1 & 1 & 1 & 1 & 1 & 1 \\
1 & 2 & 1 & 3 & 2 & 2 & 2 & 3 \\
1 & 3 & 1 & 4 & 4 & 3 & 4 & 4 \\
1 & 4 & 1 & 2 & 3 & 4 & 3 & 2 \\
2 & 1 & 2 & 3 & 4 & 1 & 4 & 3 \\
2 & 2 & 2 & 1 & 3 & 2 & 3 & 1 \\
2 & 3 & 2 & 2 & 1 & 3 & 1 & 2 \\
2 & 4 & 2 & 4 & 2 & 4 & 2 & 4 \\
3 & 1 & 3 & 2 & 2 & 1 & 2 & 2 \\
3 & 2 & 3 & 4 & 1 & 2 & 1 & 4 \\
3 & 3 & 3 & 3 & 3 & 3 & 3 & 3 \\
3 & 4 & 3 & 1 & 4 & 4 & 4 & 1 \\
4 & 1 & 4 & 4 & 3 & 1 & 3 & 4 \\
4 & 2 & 4 & 2 & 4 & 2 & 4 & 2 \\
4 & 3 & 4 & 1 & 2 & 3 & 2 & 1 \\
4 & 4 & 4 & 3 & 1 & 4 & 1 & 3 \\
\hline
\end{tabular}
\caption{This table accompanies Figure \ref{fig:realspecial} (left).}
\label{tab:example}
\end{table}

 Note that $L$ is obtained from $J$ by performing oriented 2-moves twice. It turns out that the biquandle colorings by $T$ are preserved by this oriented move. Just in case, the biquandle colorings of $L$ are also listed in Table \ref{tab:example2}. A pattern that emerges in Table \ref{tab:example} and Table \ref{tab:example2} is that for any fixed short arc, the 16 colorings are partitioned into four blocks of size four. This means that for each coloring of one summand, there are four colored diagrams for the other summand such that the labels at the connected sum short arcs match up. This implies that $Col_T(K_1\#K_2) = \frac{1}{4}Col_T(K_1)\cdot Col_T(K_2) = 4^3$. Here, $K_i$ can be another copy of $J$ or a copy of $L$. By induction, we can conclude that $Col_T(\kappa_{m,n}\#K_1)=\frac{1}{4}4^n\cdot 16=4^{n+1} \leq 4^{b_2(\kappa_{m,n+1})}.$ Taking the log gives the desired result.
 
 \begin{table}[h!]
\centering
\begin{tabular}{|c|c|c|c|c|c|c|c|c|c|c|c|c|c|c|c|}
\hline
\textbf{a} & \textbf{b} & \textbf{c} & \textbf{d} & \textbf{e} & \textbf{f} & \textbf{g} & \textbf{h} & \textbf{i} & \textbf{j} & \textbf{k} & \textbf{l} & \textbf{m} & \textbf{n} & \textbf{o} & \textbf{p} \\
\hline
1 & 1 & 1 & 1 & 1 & 1 & 1 & 1 & 1 & 1 & 1 & 1 & 1 & 1 & 1 & 1 \\
1 & 2 & 1 & 2 & 1 & 4 & 3 & 2 & 3 & 2 & 3 & 4 & 3 & 4 & 1 & 4 \\
1 & 3 & 1 & 3 & 1 & 2 & 2 & 3 & 2 & 3 & 2 & 2 & 2 & 2 & 1 & 2 \\
1 & 4 & 1 & 4 & 1 & 3 & 4 & 4 & 4 & 4 & 4 & 3 & 4 & 3 & 1 & 3 \\
2 & 1 & 2 & 1 & 2 & 2 & 3 & 1 & 3 & 1 & 3 & 2 & 3 & 2 & 2 & 2 \\
2 & 2 & 2 & 2 & 2 & 3 & 1 & 2 & 1 & 2 & 1 & 3 & 1 & 3 & 2 & 3 \\
2 & 3 & 2 & 3 & 2 & 1 & 4 & 3 & 4 & 3 & 4 & 1 & 4 & 1 & 2 & 1 \\
2 & 4 & 2 & 4 & 2 & 4 & 2 & 4 & 2 & 4 & 2 & 4 & 2 & 4 & 2 & 4 \\
3 & 1 & 3 & 1 & 3 & 4 & 4 & 1 & 4 & 1 & 4 & 4 & 4 & 4 & 3 & 4 \\
3 & 2 & 3 & 2 & 3 & 1 & 2 & 2 & 2 & 2 & 2 & 1 & 2 & 1 & 3 & 1 \\
3 & 3 & 3 & 3 & 3 & 3 & 3 & 3 & 3 & 3 & 3 & 3 & 3 & 3 & 3 & 3 \\
3 & 4 & 3 & 4 & 3 & 2 & 1 & 4 & 1 & 4 & 1 & 2 & 1 & 2 & 3 & 2 \\
4 & 1 & 4 & 1 & 4 & 3 & 2 & 1 & 2 & 1 & 2 & 3 & 2 & 3 & 4 & 3 \\
4 & 2 & 4 & 2 & 4 & 2 & 4 & 2 & 4 & 2 & 4 & 2 & 4 & 2 & 4 & 2 \\
4 & 3 & 4 & 3 & 4 & 4 & 1 & 3 & 1 & 3 & 1 & 4 & 1 & 4 & 4 & 4 \\
4 & 4 & 4 & 4 & 4 & 1 & 3 & 4 & 3 & 4 & 3 & 1 & 3 & 1 & 4 & 1 \\
\hline
\end{tabular}
\caption{Biquandle colorings for $L$.}
\label{tab:example2}
\end{table}  
\end{proof}
\begin{figure}[ht!]
\labellist
\small\hair 2pt
\pinlabel $c$ at 140 173
\pinlabel $a$ at 139 443
\pinlabel $h$ at 1 403
\pinlabel $e$ at 249 443
\pinlabel $f$ at 409 443
\pinlabel $d$ at 125 -13
\pinlabel $b$ at 290 293
\pinlabel $g$ at 275 173
\pinlabel $a$ at 609 759
\pinlabel $b$ at 929 759
\endlabellist
\includegraphics[width=3cm]{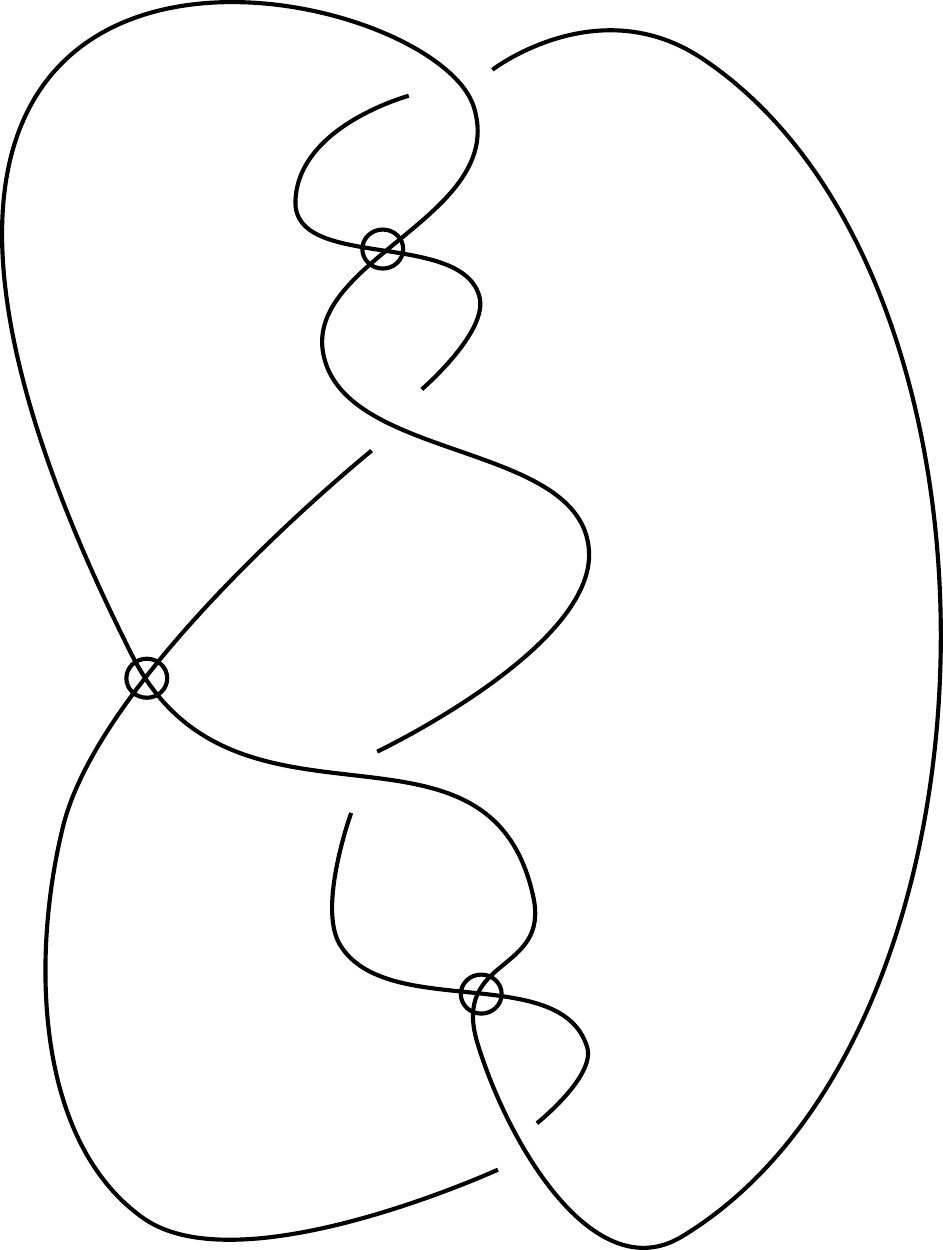}
\includegraphics[width=3cm]{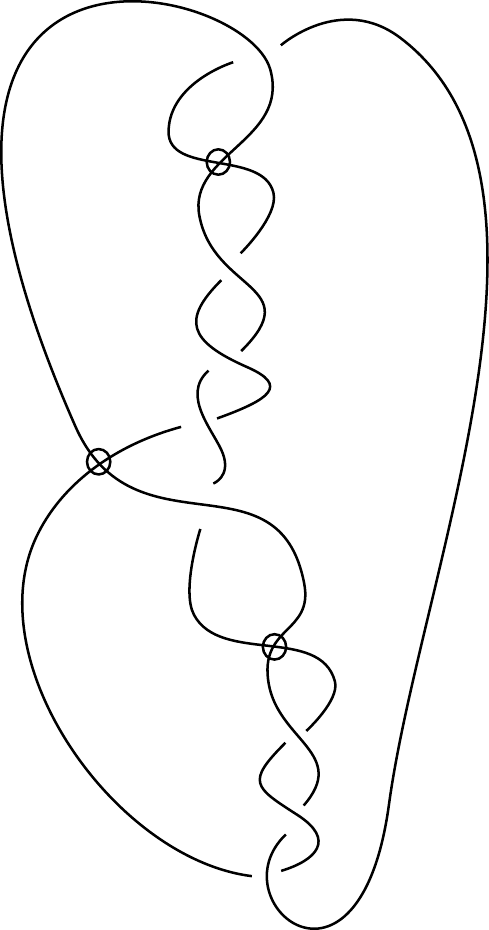}
\centering
\caption{(Left) The virtual knot $J$. The knot $J$ is oriented following the alphabetical order of the labels. (Right) The virtual knot $L$, which is also oriented following the alphabetical order of the labels. Not all labels are shown.}\label{fig:realspecial}
\end{figure}

\begin{figure}[ht!]
\includegraphics[width=5cm]{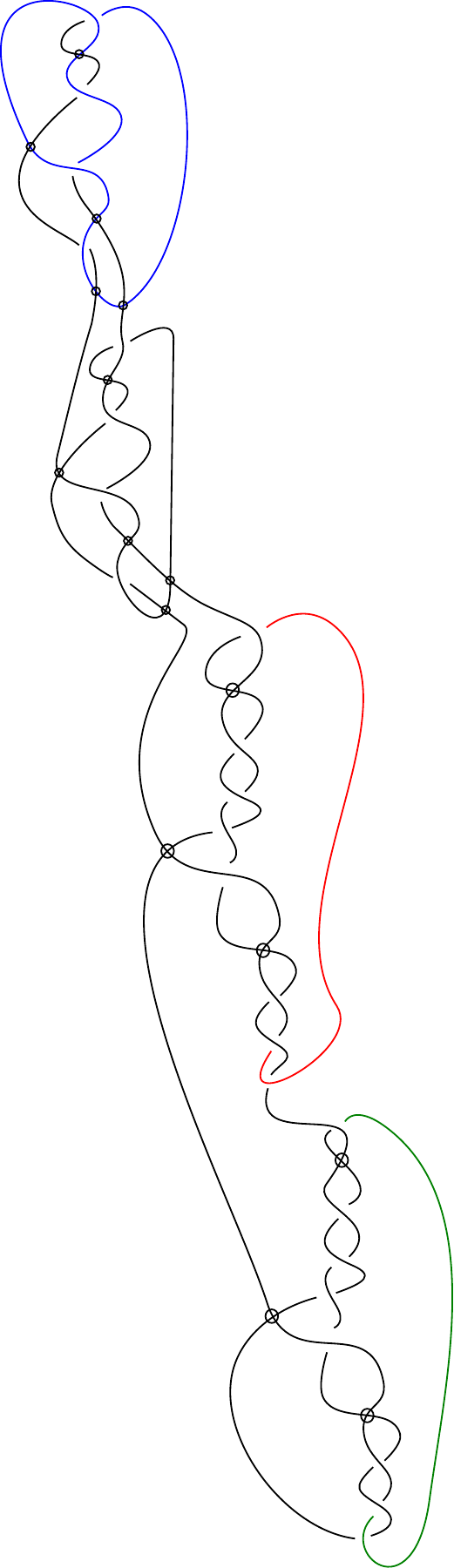}
\centering
\caption{The virtual knot $\kappa_{3,5}$ with the seeds shown.}\label{fig:k35}
\end{figure}

\section{Proper enhancements}

Some choices of quandles listed in Theorem \ref{thm:Taniguchi} will never give a proper enhancement. We mention another situation where the quivers are isomorphic.

\bridgequiver*

\begin{proof}
    For $i=1,2,$ there exist diagrams $D_i$ representing $L_i$, where $|X|^{b_2(L_i)}$ possible coloring assignments appear on short arcs containing the $b_2(L_i)$ local maxima. Thus, vertices of \( \mathcal{Q}^S_X(L_i) \) are indexed by $(\alpha_1^i,\alpha_2^i,\cdots, \alpha_{b_2(L_i)}^i) \in (\mathbb{Z}_{|X|})^{b_2(L_i)}$ according to the colors at these maxima. We define a bijective map $\varphi$ from vertices of \( \mathcal{Q}^S_X(L_1) \) to \( \mathcal{Q}^S_X(L_2) \) taking a vertex corresponding to $(\alpha_1^1,\alpha_2^1,\cdots, \alpha_{b_2(L_1)}^1)$ to a vertex corresponding to $(\alpha_1^2,\alpha_2^2,\cdots, \alpha_{b_2(L_2)}^2)$. For any edge connecting two elements of $(\mathbb{Z}_{|X|})^{b_2(L_1)}$, there is an edge  connecting two elements of $(\mathbb{Z}_{|X|})^{b_2(L_2)}$.
\end{proof}

Next, we will give infinitely many pairs of links, where the enhancement is proper.

Consider the biquandle $Z$ defined by $x\text{ }\overline{\triangleright}\text{ } y=3x$ and $x \text{ }\underline{\triangleright} \text{ }y=x+2y$ in $\mathbb{Z}_4$.

    \begin{center}
    \includegraphics[width=0.3\linewidth]{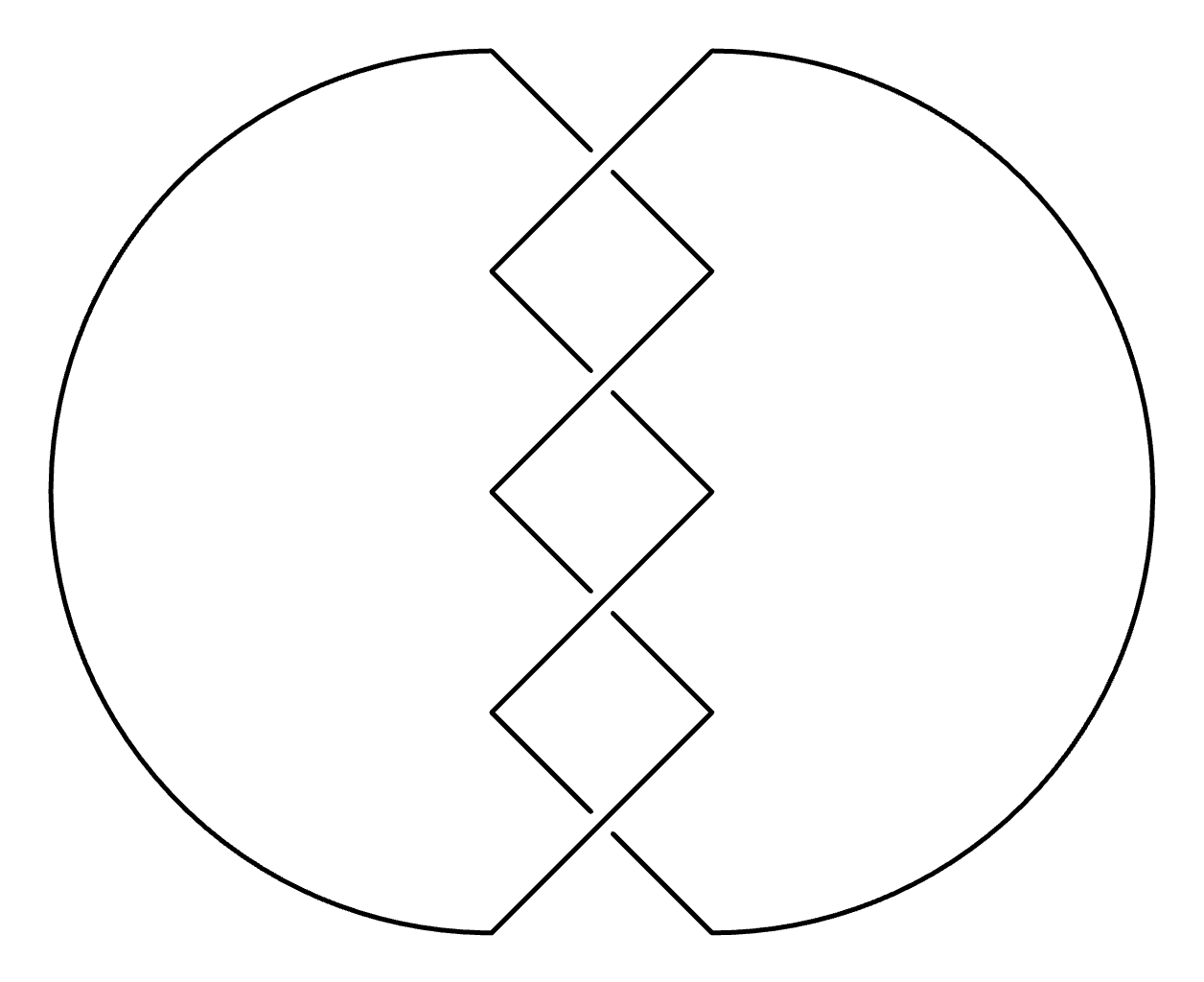}
    \label{fig:enter-label}
    \end{center}

For $T(2,4)$, there are 16 possible colorings from the following matrix:

\begin{center}
    $\begin{bmatrix}
    0 & 3 & 3 & 0 & 0 & 0 & 0 & 0\\
    0 & 0 & 0 & 3 & 3 & 0 & 0 & 0\\
    0 & 0 & 0 & 0 & 0 & 3 & 3 & 0\\
    3 & 0 & 0 & 0 & 0 & 0 & 0 & 3\\
    1 & 0 & 2 & 3 & 0 & 0 & 0 & 0\\
    0 & 0 & 1 & 0 & 2 & 3 & 0 & 0\\
    0 & 0 & 0 & 0 & 1 & 0 & 2 & 3\\
    2 & 3 & 0 & 0 & 0 & 0 & 1 & 0\\
    \end{bmatrix}$
\end{center}

With computer aid, we can solve for the null space of this matrix yielding the following colorings:
$$(0,0,0,0,0,0,0,0),(0,1,3,2,2,3,1,0),(0,2,2,0,0,2,2,0),(0,3,1,2,2,1,3,0),$$
$$(1,0,0,1,3,2,2,3),(1,1,3,3,1,1,3,3),(1,2,2,1,3,0,0,3),(1,3,1,3,1,3,1,3),$$
$$(2,0,0,2,2,0,0,2),(2,1,3,0,0,3,1,2),(2,2,2,2,2,2,2,2),(2,3,1,0,0,1,3,2),$$
$$(3,0,0,3,1,2,2,1),(3,1,3,1,3,1,3,1),(3,2,2,3,1,0,0,1),(3,3,1,1,3,3,1,1).$$

We can express the specified biquandle colorings of $T(2,n)$ when $n$ is a multiple of $4$:

\begin{lem}
     For all $n$ such that $n=4k$ for some $k\in\mathbb{Z}^+$, then $T(2,n)$ will have 16 colorings with respect to the biquandle $Z$.\label{lem:coloringt2n}
\end{lem}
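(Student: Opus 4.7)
The plan is to parametrize the biquandle colorings of $T(2,n)$ by the labels on the two strands at a single slice and reduce the problem to a short matrix identity in $\mathrm{Mat}_2(\mathbb{Z}_4)$. Realize $T(2,n)$ as the closure of the $2$-braid $\sigma_1^n$, and let $a_i,b_i\in\mathbb{Z}_4$ denote the labels on the left and right strands between the $i$-th and $(i{+}1)$-st crossings, for $i=0,1,\dots,n-1$ (indices cyclic after closure). Applying the positive-crossing rules $z=y\,\overline{\triangleright}\,x$ and $w=x\,\underline{\triangleright}\,y$ together with the operations $x\,\overline{\triangleright}\,y=3x$ and $x\,\underline{\triangleright}\,y=x+2y$ of $Z$, the crossing at height $i$ yields $a_i=3a_{i-1}$ and $b_i=b_{i-1}+2a_{i-1}$, or in matrix form
\begin{equation*}
\begin{pmatrix}a_i\\ b_i\end{pmatrix}=M\begin{pmatrix}a_{i-1}\\ b_{i-1}\end{pmatrix},\qquad M=\begin{pmatrix}3&0\\ 2&1\end{pmatrix}\in\mathrm{Mat}_2(\mathbb{Z}_4).
\end{equation*}

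Since $n=4k$ is even, each of the two components of $T(2,4k)$ closes up to itself, so a valid coloring corresponds precisely to a pair $(a_0,b_0)\in\mathbb{Z}_4^2$ satisfying the single constraint $M^{n}(a_0,b_0)^{T}=(a_0,b_0)^{T}$. The key observation is the one-line computation $M^{2}=I$ in $\mathrm{Mat}_2(\mathbb{Z}_4)$ (from $3^{2}\equiv 1$ and $2\cdot 3+1\cdot 2\equiv 0 \pmod 4$), whence $M^{4k}=(M^{2})^{2k}=I$ and the closure condition is automatic. Every pair $(a_0,b_0)\in\mathbb{Z}_4^{2}$ therefore extends uniquely to a coloring of $T(2,4k)$, giving $|\mathbb{Z}_4|^{2}=16$ colorings in total.

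The main difficulty will be the orientation bookkeeping — pinning down the over/under designation and the roles of $x,y,z,w$ at each crossing so that the matrix $M$ above is correctly identified; once $M$ is in hand, the identity $M^{2}=I$ is mechanical and the rest is linear algebra. As a sanity check, taking $k=1$ recovers exactly the sixteen colorings of $T(2,4)$ explicitly tabulated just above the lemma, each one produced by an appropriate choice of $(a_0,b_0)\in\mathbb{Z}_4^{2}$.
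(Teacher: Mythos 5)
Your overall strategy --- encode the propagation of labels along the braid $\sigma_1^n$ by a transfer matrix $M\in\mathrm{Mat}_2(\mathbb{Z}_4)$ and reduce the count to the closure condition $M^n v=v$ --- is sound, and it is essentially a formalization of the paper's own argument that ``the coloring is periodic after four crossings.'' However, the matrix you wrote down is not the transfer matrix of the crossing, and your key identity $M^2=I$ is false. At a crossing of $\sigma_1$ the two strands exchange positions: if the incoming left/right labels are $(a,b)$ and the left strand passes over, the outgoing \emph{left} label is $b\,\underline{\triangleright}\,a=b+2a$ while the outgoing \emph{right} label is $a\,\overline{\triangleright}\,b=3a$. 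You have the two output expressions right but assigned them to the wrong positions, omitting the swap. The correct matrix is $N=\left(\begin{smallmatrix}2&1\\3&0\end{smallmatrix}\right)$ (or its inverse, depending on reading direction), and $N^2=\left(\begin{smallmatrix}3&2\\2&3\end{smallmatrix}\right)\neq I$, while $N^4=I$. This is not a cosmetic slip: if the one-crossing map had order $2$, your argument would give $16$ colorings for every even $n$, but for the Hopf link $T(2,2)$ the closure condition $N^2v=v$ reads $2(a+b)\equiv 0\pmod 4$ and has only $8$ solutions. The period-$4$ behaviour is visible in the paper's own table for $T(2,4)$: the coloring $(0,1,3,2,2,3,1,0)$ runs through four \emph{distinct} consecutive pairs $(0,1),(3,2),(2,3),(1,0)$, which could not happen if the propagation had period $2$.

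The good news is that the lemma only needs $N^{4k}=I$, which does hold because $N$ has order $4$ in $GL_2(\mathbb{Z}_4)$. So your proof is repaired by replacing $M$ with the correct matrix and the claim $M^2=I$ with $N^4=I$ (equivalently, checking that the two-crossing map $\left(\begin{smallmatrix}3&2\\2&3\end{smallmatrix}\right)$ squares to the identity). With that correction your argument coincides with the paper's, which verifies $T(2,4)$ by an explicit null-space computation and then stacks $k$ copies using exactly this periodicity; your version has the advantage of making the period-$4$ claim a checkable matrix identity rather than an assertion, but only once the matrix itself is right.
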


 \begin{proof}
     Since the colorings of the maxima of $T(2,4)$ are independent of one another, and the minima of $T(2,4)$ always match the corresponding maxima, then the colorings are  uniquely determined by the initial choices.

     Then, $T(2,4)$ will have the maximum colorings of $4\cdot 4=16$ (four choices for each maximum), as shown above.

     Notice that $T(2,8)$ contains two $T(2,4)$ links attached together. In general, $T(2,4k)$ has a diagram consisting of $k$ copies of $T(2,4)$ links stacked vertically.

     Then the number of colorings hold for any $T(2,n)$, where $n$ is a multiple of 4 since any initial choice for the two local maxima determine the coloring of the entire link uniquely (the coloring is periodic after four crossings).
 \end{proof}

Next, we present links with the same coloring number, but different quiver invariant. In particular, the following link has 16 colorings

We outline the idea of finding an infinite family of pairs of links whose quandle coloring quivers are proper enhancements. Fix a positive integer $b>1$ and the biquandle $Z$ from Lemma \ref{lem:coloringt2n}. First, we find a link $L$ with $\beta(L) = b$ such that the bound in Proposition \ref{prop:bridge} is sharp, i.e.  $Col_X(L) = 4^b$. For example, the connected sum of $b-1$ copies of $T(2,4)$ will work.

Then, we let $L'$ be the chain link of $2b-1$ components (see Figure \ref{fig:chain}). We observe that $\beta(L') = 2b-1 >\beta(L)$. We will show that 
$Col_X(L') = 4^b =Col_X(L')$ so that the bound in Proposition \ref{prop:bridge} for $L'$ is not sharp.
Finally, we will show that their quandle coloring quivers are different.


\begin{figure}[ht!]
\includegraphics[width=3cm]{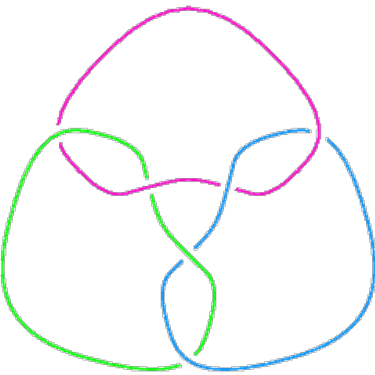}
\includegraphics[width=3cm]{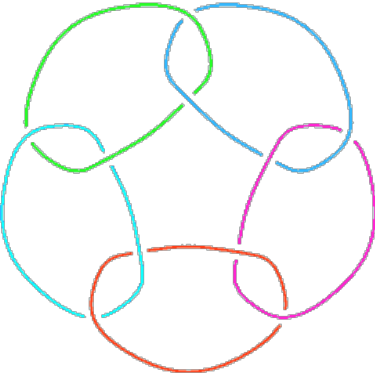}
\centering
\caption{From left to right, chain links of 3 and 5 components, respectively.}\label{fig:chain}
\end{figure}
\begin{lem}
    $Col_X(L')=4^{b}$\label{lem:samecount}
\end{lem}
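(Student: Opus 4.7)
The strategy is to exploit the $\mathbb{Z}_4$-linearity of the biquandle $Z$: at each crossing, the overstrand label transforms as $y \mapsto 3y$ and the understrand label as $x \mapsto x + 2y$. Since every crossing relation is $\mathbb{Z}_4$-linear, the set of $Z$-colorings of $L'$ forms a $\mathbb{Z}_4$-module, and I would determine its cardinality by writing down and solving the defining linear system explicitly.

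I would begin by choosing one ``base'' semiarc of each component $C_i$ of $L'$, with label $\alpha_i \in \mathbb{Z}_4$, for $i = 1, \ldots, 2b-1$. Using the two biquandle rules to propagate around each $C_i$, every semiarc label of $L'$ can be written as an explicit $\mathbb{Z}_4$-linear combination of the $\alpha_j$'s, giving $2b-1$ free parameters a priori. I would then extract the loop-closing conditions: because each $C_i$ is a closed loop, propagating from the base semiarc around $C_i$ and back must recover $\alpha_i$. A direct computation on the local linked-pair picture, in which two adjacent chain components are linked like a Hopf link with each component over at one crossing and under at the other, shows that each loop-closing equation collapses to a parity constraint of the form $\alpha_j + \alpha_k \equiv 0 \pmod 2$ between base labels of adjacent (or neighbor-of-neighbor) components. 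As a small warm-up: for the $3$-component chain link, tracking the four middle semiarcs and the two end arcs yields exactly the independent relations $\alpha_1 \equiv \alpha_2 \pmod 2$ and $\alpha_2 \equiv \alpha_3 \pmod 2$, with a third relation $\alpha_1 \equiv \alpha_3 \pmod 2$ redundant.

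Stringing these relations across the whole chain forces all $\alpha_i$ to share a common parity, with one redundant relation among the $2b-1$ loop-closing conditions; the independent relations thus have rank $2b-2$ over $\mathbb{F}_2$. The count is then $4^{2b-1}/2^{2b-2} = 2 \cdot 2^{2b-1} = 4^b$, or equivalently $2$ choices for the common parity times $2$ choices for each $\alpha_i$ within its parity class.

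The main obstacle is the careful local bookkeeping at each linking, making sure the over/under and orientation conventions in the standard chain-link diagram are pinned down so that each loop-closing relation really collapses to the claimed parity relation and so that no extra $\mathbb{Z}_4$-level constraint (beyond the mod $2$ relations) is missed. Once the local picture at a single Hopf-like linking is verified and matched against the small cases $b = 1$ (unknot, $4$ colorings) and $b = 2$ ($3$-component chain link, $16$ colorings), the propagation across the chain and the counting argument are routine.
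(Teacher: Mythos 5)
Your strategy is the same in spirit as the paper's — both treat the colorings of the chain link as the solution set of a linear system over $\mathbb{Z}_4$, parametrize by a small number of labels per component, and count — but your proposal defers exactly the step that constitutes the proof. The assertion that ``each loop-closing equation collapses to a parity constraint $\alpha_j+\alpha_k\equiv 0\pmod 2$'' and that no additional mod-$4$ constraint survives is the entire content of the lemma, and you flag it yourself as the main obstacle without carrying it out. The paper does carry it out: writing $x_k,y_k$ for the two under-arcs of the $k$-th component, it records the relations $2x_{k-1}=x_k+y_k$, $2y_k=x_{k-1}+y_{k-1}$, $2x_k=x_{k+1}+y_{k+1}$, $2y_{k+1}=x_k+y_k$, deduces (i) that all labels share a single parity — using, as you anticipate in your parenthetical, the neighbor-of-neighbor relation $2x_{k-1}=2y_{k+1}$ together with the oddness of $2b-1$ to connect the whole cycle — and (ii) the genuinely mod-$4$ fact that $x_k=y_k$ for every $k$, established by the short case analysis $(x_1,y_1)=(0,2)$ or $(1,3)$, each of which contradicts the parity condition. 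Point (ii) is what guarantees that each component contributes exactly one free $\mathbb{Z}_4$ parameter, i.e., that your ``one base label per component'' is a faithful parametrization; without it your count of $4^{2b-1}/2^{2b-2}$ has no justification and could be off by powers of $2$.

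A second caution concerns the propagation framework itself. If you work with the biquandle rules of $Z$ literally, labels also change at over-crossings ($u\mapsto 3u$), so each component of the chain has four semiarcs, and propagating along component $i$ past an under-crossing requires the neighbor's semiarc label at that crossing — which is itself only expressible in terms of $\alpha_{i\pm 1}$ after propagating along that neighbor. For a cyclic chain this dependency is not obviously acyclic, so ``propagate around each component, then impose loop-closing conditions'' is not automatic the way it is for Wirtinger presentations of a single component; you must either exhibit a consistent ordering of the semiarcs or simply row-reduce the full $\mathbb{Z}_4$-linear system. (Note also that the paper's own computation is phrased with relations of the form $u+v=2w$ and two arcs per component; the local relations coming from $u\mapsto 3u$, $v\mapsto v+2u$ look different, and verifying that the count is still $4^b$ under your conventions is precisely the bookkeeping you have postponed.) Until that local computation is done and checked against, say, the $3$-component case, the proposal is a plan rather than a proof.
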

\begin{proof}

Let $x_k$ be the label of the outer strand and $y_k$ be the label of the inner strand of the $k$-th component of $L'$ (Figure \ref{fig:2tangle}). From the dihedral quandle relation on each crossing on the $k$-th component, we have the following equations in $\mathbb{Z}/4\mathbb{Z}$:

\begin{align}
2 x_{k-1} &= x_k + y_k, \label{lem51} \\
2 y_k &= x_{k-1} + y_{k-1}, \label{lem52} \\
2 x_k &= x_{k+1} + y_{k+1}, \label{lem53} \\
2 y_{k+1} &= x_k + y_k, \label{lem54}
\end{align}
for each $k=1, \ldots , 2b -1$ and we can identify the $k$-th component with the $j$-th component when $k = j \pmod{2b-1}$ .

From the above equations, we see that $x_k + y_k$ is even, i.e. $x_k$ and $y_k$ must have the same parity.
From equations (\ref{lem51}) and (\ref{lem54}), we also have $2 x_{k-1} = 2 y_{k+1}$, which implies that both the strands in the $(k-1)$-th component and both the strands in the $(k+1)$-th components have the same parity. We then see that $x_1 , x_3 , \ldots, x_{2b-1}$ have the same parity. Moreover, it follows that, at $k=1$,  $x_2$ and $x_{2b-1}$ have the same parity. Consequently, every label must have the same parity.

It will also turn out that $x_k = y_k$ for every $k=1, \ldots , 2b -1$. Without loss of generality, we consider 2 cases $(x_1 , y_1) = (0,2)$ and $(x_1 , y_1) = (1,3)$. In the first case, we have $2 y_2 = 2$ and $y_2$ must be odd. In the second case, we have $2 y_2 = 0$ and $y_2$ must be even. In either case, this violates the same parity condition we deduced.

To count the number of all colorings, we first pick the parity (even/odd) of all labels. For each parity, there are exactly 2 choices for each $x_k = y_k$ for each $k=1, \ldots , 2b -1$. Thus, there are $2 \cdot 2^{2b-1} = 4^b$ colorings.






\end{proof}

\begin{figure}[ht!]
\labellist
\small\hair 2pt
\pinlabel $x$ at -10 203
\pinlabel $y$ at -15 3
\pinlabel $w$ at 110 203
\pinlabel $z$ at 125 3
\pinlabel $even$ at 390 243
\pinlabel $even$ at 365 3
\pinlabel $odd$ at 540 293
\pinlabel $odd$ at 540 -13
\pinlabel $even$ at 690 293
\pinlabel $even$ at 690 -13
\pinlabel $odd$ at 860 293
\pinlabel $odd$ at 860 -13
\endlabellist
\includegraphics[width=1cm]{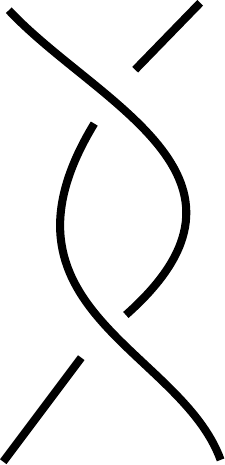}
\hspace{1in}
\includegraphics[width=4cm]{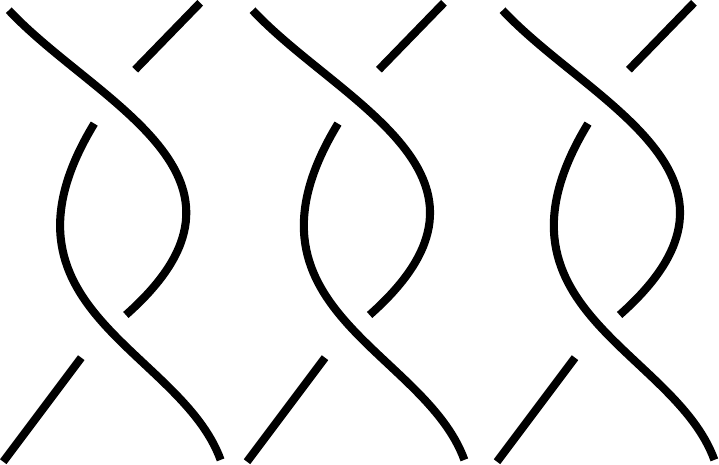}
\centering
\caption{(Left) Given this label, we have that $x+y$ is even and if $x\neq y$ then the other endpoints are distinct and opposite parity. (Right) If $x\neq y$, then the parities do not match up as we try to glue the right endpoints of $T_{2\beta(L)-1}$ to the left endpoints of $T_1$ and we do not have a consistent coloring.}\label{fig:2tangle}
\end{figure}
\begin{rem}
    The argument in Proposition \ref{lem:samecount} does not hold in general. That is, some labelings where the 4 endpoints do not have the same color can fit in a consistently colored diagram by a quandle. For instance, the readers should work out the coloring by $R_9$ of the pretzel knot $P(3,3,3)$ to see this phenomenon.
\end{rem}

\begin{theorem}\label{thm:1}
    The quiver invariant $\mathcal{Q}^{\varphi}_{X}$ with respect to the endomorphism $\varphi$ defined by $1\mapsto 2, 2\mapsto 4, 3\mapsto 2, 4\mapsto 4$ distinguishes $L$ from $L'$.
\end{theorem}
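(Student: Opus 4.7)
The plan is to distinguish $L$ from $L'$ by computing the in-degree polynomials of $\mathcal{Q}^{\varphi}_Z(L)$ and $\mathcal{Q}^{\varphi}_Z(L')$ and showing they differ. First I would note that identifying $4 \in \{1,2,3,4\}$ with $0 \in \mathbb{Z}_4$, the map $\varphi$ coincides with multiplication by $2$. Because both biquandle operations of $Z$ are $\mathbb{Z}_4$-linear, $\varphi$ is automatically a biquandle endomorphism of $Z$; consequently post-composition with $\varphi$ sends a $Z$-coloring $f$ to the coloring that multiplies every semiarc label by $2$.

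For $L$, iterating the connected-sum formula starting from $Col_Z(T(2,4)) = 16$ (Lemma \ref{lem:coloringt2n}) yields $Col_Z(L) = 4^b$, while the additivity $b_2(K_1 \# K_2) = b_2(K_1) + b_2(K_2) - 1$ used in the proof of Theorem \ref{thm:main} gives $b_2(L) = b$. Hence $Col_Z(L) = |Z|^{b_2(L)}$, and in the proof of Proposition \ref{prop:bridge2} the labels at the $b$ local maxima determine a coloring uniquely while every tuple of bridge labels extends; the bridge-label map is thus a bijection $\Hom(X_L, Z) \xrightarrow{\sim} (\mathbb{Z}_4)^b$. Since $\varphi \circ f$ is itself a valid coloring whose bridge labels are obtained by applying $\varphi$ componentwise, under this bijection $\varphi$ acts as $(c_1,\dots,c_b) \mapsto (2c_1,\dots,2c_b)$. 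Its image is $\{0,2\}^b$ of size $2^b$, and each image tuple has exactly $2^b$ preimages (two choices per coordinate), giving
\[
P_L(u) = (4^b - 2^b) + 2^b\, u^{2^b}.
\]

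For $L'$, Lemma \ref{lem:samecount} and its proof parametrize the $4^b$ colorings by a common parity $p \in \{0,1\}$ together with independent choices of $x_k = y_k$ from the two-element parity class for each $k = 1,\dots,2b-1$. Since $\varphi(c) = 2c$ sends every even label to $0$ and every odd label to $2$, the composite $\varphi \circ f$ is the constant-$0$ coloring if $f$ is all-even and the constant-$2$ coloring if $f$ is all-odd; both are valid constant $Z$-colorings since $c \in \{0,2\}$ implies $3c = c$ in $\mathbb{Z}_4$. Each of these two image vertices has exactly $2^{2b-1}$ preimages, yielding
\[
P_{L'}(u) = (4^b - 2) + 2\, u^{2^{2b-1}}.
\]
For every $b > 1$ one has $2^{2b-1} > 2^b$, so the top exponents of $P_L$ and $P_{L'}$ disagree; the two quivers cannot be isomorphic.

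The main obstacle is the $L$-side argument: one must see that the bridge-label parametrization is $\varphi$-equivariant, i.e.\ that $\varphi$ acting on a coloring corresponds to coordinatewise multiplication by $2$ on $(c_1,\dots,c_b)$. This reduces to the sharpness $Col_Z(L) = |Z|^{b_2(L)}$ making the bridge-label map a bijection, together with the observation that $\varphi \circ f$ is the unique coloring whose bridge labels are $(\varphi(c_1),\dots,\varphi(c_b))$. The remaining ingredients — verifying the biquandle endomorphism axioms for $\varphi$ and checking that the constants $0,2$ really are the only targets of $\varphi$ on $L'$-colorings — are routine $\mathbb{Z}_4$ arithmetic.
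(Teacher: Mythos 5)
Your proposal is correct and follows essentially the same route as the paper: parametrize the vertices of each quiver by tuples of bridge/strand labels (using sharpness of the coloring bound for $L$ and the same-parity description for $L'$), observe that $\varphi$ acts by doubling each coordinate in $\mathbb{Z}_4$, and count that $L$ has $2^b$ vertices of in-degree $2^b$ while $L'$ has only the two constant vertices, each of in-degree $2^{2b-1}$. Packaging this as an in-degree polynomial is a cosmetic difference only.
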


\begin{rem}
    By observation in \cite{khandhawit2024quandle}, an endomorphism of $R_n$ has a general form $x \mapsto a x + b$. We see that the above endomorphism is given by $\varphi (x) = 2x $ .
\end{rem}

\begin{proof}

For the link $L$, a coloring is completely determined by specifying labels of the $b$ strands of a bridge diagram of $L$ since the bound in Proposition \ref{prop:bridge} is sharp. Thus, we can identify a set of vertices of $\mathcal{Q}^{\varphi}_{X}(L)$ with a set of tuples 
$$ V_{\mathcal{Q}^{\varphi}_{X}(L)} = \{ (x_1 , \ldots , x_b ) \,|\, x_k \in \mathbb{Z}/4\mathbb{Z} \}.$$
For $x, x' \in V_{\mathcal{Q}^{\varphi}_{X}(L)}$, we see that there is an edge from $x$ to $x'$ when $x'_k = 2 x_k$ for every $k$. We observe that $y' = 2y \pmod{4}$ has exactly 2 solutions when $y'$ is even and there is no solution otherwise. Thus, a vertex $x'$ has incoming edges if and only if all $x'_k$ is even. There are exactly such $2^b$ vertices and all of them have in-degree $2^b$.

Next, we consider the quiver of the link $L'$.
From the proof of Proposition \ref{lem:samecount}, we can identifty the set of vertices as
$$ V_{\mathcal{Q}^{\varphi}_{X}(L')} = \{ (x_1 , \ldots , x_{2b-1} ) \,|\, x_k \in \mathbb{Z}/4\mathbb{Z} \text{ and all } x_k \text{ have the same parity} \}.$$
Suppose that there is an edge in $\mathcal{Q}^{\varphi}_{X}(L')$  from $x$ to $x'$. Similarly, we have $x'_k = 2 x_k$ for every $k$, so each $x'_k$ is even. If there were $j \ne k$ such that $x'_j = 0$ and $x'_k = 2$, we would have $x_j$ even and $x_k$ odd and they would fail to have the same parity. Therefore, there are exactly 2 vertices, namely $(0,\ldots,0)$ and $(2,\ldots,2)$, which have incoming edges. Moreover, their in-degree are $2^{2b-1}$. 

Therefore, the quivers $\mathcal{Q}^{\varphi}_{X}(L)$ and $\mathcal{Q}^{\varphi}_{X}(L')$ are not isomorphic and they are proper enhancement of the coloring invariants.


\end{proof}

Although the bridge indices of the pairs we constructed in the previous section can be arbitrary, they are different.  
In this section, we will construct other infinitely many pairs of 3-bridge links that give proper enhancement. In particular, the bound in Proposition \ref{prop:bridge} for these links will not be sharp.  

Let $p$ be an odd prime number and $r$ be an integer relatively prime to $p$. We will use the dihedral quandle $X = R_{p^2}$ of order $p^2$.  The links we will consider are the pretzel link $P(p^2,2r,p^2),$ and
the connected sum $T(2,p)\# T(2,p)$ of the 2-bridge torus knots.




\begin{lem} \label{lemmpretz}
    $Col_{R_{p^2}}(P(p^2,2r,p^2)) = p^4$.
\end{lem}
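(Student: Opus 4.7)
The plan is to reduce the problem to a small linear system over $\mathbb{Z}/p^2\mathbb{Z}$ by labeling only the three inter-tangle arcs at the top of the pretzel diagram and propagating through each twist region via a tangle matrix.

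First, I would check that a single crossing in a $2$-strand twist region colored by $R_{p^2}$ sends the pair of top labels $(x, y)$ to the bottom pair $(2x - y,\, x)$. Iterating this corresponds to the matrix
\[
    M = \begin{pmatrix} 2 & -1 \\ 1 & 0 \end{pmatrix}
\]
acting on $(x,y)^T$. Since $(M - I)^2 = 0$, one gets the closed form
\[
    M^n \;=\; I + n(M - I) \;=\; \begin{pmatrix} 1+n & -n \\ n & 1-n \end{pmatrix},
\]
and in particular $M^{p^2} \equiv I \pmod{p^2}$, so the two outer tangles of $P(p^2, 2r, p^2)$ act trivially on labels in $R_{p^2}$.

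Next, I would introduce labels $x_1, x_2, x_3 \in \mathbb{Z}/p^2\mathbb{Z}$ on the three top inter-tangle arcs, with tangle $i$ receiving the pair $(x_i, x_{i+1})$ (indices mod $3$), and name the bottom arcs $y_1, y_2, y_3$ symmetrically. Each $y_j$ is computed in two ways, one from each of the two tangles sharing that arc, and must agree. The outer tangles yield $y_j \equiv x_j \pmod{p^2}$, while the middle tangle gives $y_2 \equiv (1+2r)x_2 - 2r x_3$ and $y_3 \equiv 2r x_2 + (1-2r) x_3 \pmod{p^2}$. Equating produces the single nontrivial relation
\[
    2r(x_2 - x_3) \equiv 0 \pmod{p^2}.
\]
Since $p$ is odd and $\gcd(r, p) = 1$, we have $\gcd(2r, p^2) = 1$, so this forces $x_2 \equiv x_3 \pmod{p^2}$.

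Finally, the free parameters are $x_1$ and $x_2 = x_3$, each ranging independently over $\mathbb{Z}/p^2\mathbb{Z}$, yielding $p^2 \cdot p^2 = p^4$ colorings. The only nontrivial bookkeeping is the cyclic identification of top and bottom arcs around the pretzel; once this is fixed, the nilpotency of $M - I$ reduces the argument to the one-line consistency check above, and no further obstacle arises.
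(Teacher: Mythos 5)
Your proposal is correct and follows essentially the same route as the paper's proof: propagate labels through each twist region, observe that the $p^2$-twist regions act as the identity on $R_{p^2}$, and extract the single constraint $2r(x_2-x_3)\equiv 0 \pmod{p^2}$ from the middle region, leaving two free parameters. The only cosmetic difference is that you derive the twist-region formula from the nilpotency of $M-I$, whereas the paper states the closed form $c = mb-(m-1)a$, $d=(m+1)b-ma$ directly.
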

\begin{proof}
    In the standard pretzel diagram, there are three local maxima with respect to the standard height function on $\mathbb{R}^2$. We will assign labelings $x,y,z$ to the strands $s_1,s_2,s_3$ containing these local maxima.

    Observe that the labelings of every strand in the twist region is determined by the labeling of the northwest strand and the northeast strand. In particular, when the labelings are $a$ and $b$ and the region has $m$ twists, the dihedral quandle relation gives
    \[ c = mb - (m-1)a  \text{ and } d = (m+1)b - ma, \]
    where $c$ and $d$ are the labelings of the southwest strand and the southeast strand respectively.
    In the special case where $m = p^2$, we see that $c=a$ and $d=b$.

    Consequently, the labeling $(x,y,z)$ of $s_1,s_2,s_3$ propagate labelings to the entire diagram by the quandle rules (see Figure \ref{fig:pretz}). However, there is one constraint given by the middle twist region, i.e. $y = 2rz - (2r-1)y$. We see that the constraint is equivalent to $0 = 2r(z-y) \pmod{p^2}$ and happens only when $y=z$ since $gcd(p^2,2r)=1$.

    We can conclude that the coloring of $P(p^2,2r,p^2)$ is completely determined by $x$ and $y$ and, thus, $Col_{R_{p^2}}(P(p^2,2r,p^2)) = p^2\cdot p^2 = p^4.$


    
\end{proof}

\begin{figure}[ht!]
\labellist
\small\hair 2pt
\pinlabel $p^2$ at 12 80
\pinlabel $2r$ at 66 80
\pinlabel $p^2$ at 122 80
\pinlabel $s_1$ at -4 120
\pinlabel $x$ at -2 40
\pinlabel $s_2$ at 32 135
\pinlabel $y$ at 32 29
\pinlabel $s_3$ at 92 135
\pinlabel $z$ at 92 29
\endlabellist
\includegraphics[width=3cm]{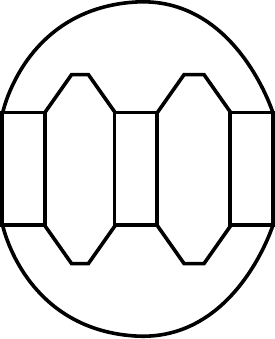}
\centering
\caption{Labeling pretzel links.}\label{fig:pretz}
\end{figure}


\begin{lem}
    $Col_{R_{p^2}}(T(2,p)\# T(2,p)) = p^4$.
\end{lem}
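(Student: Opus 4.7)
The plan is to combine two ingredients already recorded in the paper: Proposition \ref{prop:propagate} (the closed formula for dihedral colorings of 2-bridge knots) and the connected sum formula for quandle colorings from \cite{clark2016quandle}, which was invoked in the proof of Theorem \ref{thm:main}.

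\emph{Step 1: counting for a single summand.} The torus knot $T(2,p)$ is a 2-bridge knot whose determinant equals $p$ for odd $p$ (by evaluating the Alexander polynomial $(t^p+1)/(t+1)$ at $t=-1$). Applying Proposition \ref{prop:propagate} with $n = p^2$ and $\Delta = p$ yields
\[
Col_{R_{p^2}}(T(2,p)) = p^2 \cdot \gcd(p, p^2) = p^2 \cdot p = p^3.
\]

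\emph{Step 2: multiplicativity under connected sum.} The connected sum formula for dihedral colorings, used previously to compute $Col_{R_3}(\#^{m-1} L) = 3^m$ inside the proof of Theorem \ref{thm:main}, states
\[
Col_{R_{p^2}}(K_1 \# K_2) = \frac{Col_{R_{p^2}}(K_1) \cdot Col_{R_{p^2}}(K_2)}{p^2}.
\]
Specializing to $K_1 = K_2 = T(2,p)$ gives
\[
Col_{R_{p^2}}(T(2,p) \# T(2,p)) = \frac{p^3 \cdot p^3}{p^2} = p^4,
\]
matching the count established in Lemma \ref{lemmpretz}.

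The main obstacle is conceptual rather than computational: one just has to recognize that the ingredients are already in hand. A self-contained alternative would mirror the propagation analysis in Lemma \ref{lemmpretz}: label the three bridges of a standard diagram of $T(2,p) \# T(2,p)$ by $x,y,z$, propagate through each $p$-twist region modulo $p^2$ using the formulas $c = pb - (p-1)a$ and $d = (p+1)b - pa$ recalled in that proof, and impose the single closure constraint arising at the arc where the two summands are glued. One checks that the constraint relates only two of the three labels, leaving $p^2 \cdot p^2 = p^4$ free choices; this reproduces the count above without appealing to the connected sum formula.
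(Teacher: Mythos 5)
Your proof is correct and follows essentially the same route as the paper: Proposition \ref{prop:propagate} gives $Col_{R_{p^2}}(T(2,p)) = p^3$, and the connected sum formula of \cite{clark2016quandle} then yields $(p^3\cdot p^3)/p^2 = p^4$. The only detail the paper adds is that applying the divide-by-$|X|$ form of the connected sum formula is justified by the faithfulness of dihedral quandles of odd order, a hypothesis you should note when invoking it.
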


\begin{proof}
    By Proposition \ref{prop:propagate}, we have $Col_{R_{p^2}}(T(2,p))=p^2 \cdot gcd(p^2,p) = p^3.$ From the fact that the dihedral quandle of odd order is a faithful quandle \cite{bardakov2022general} and the formula of quandle counting invariants under connected sum  in \cite{clark2016quandle}, we obtain  $Col_{R_{p^2}}(T(2,p)\# T(2,p)) = (p^3\cdot p^3)/p^2=p^4$ as desired.
\end{proof}

We consider an endomorphism $\varphi$ of $R_{p^2}$ given by $\varphi(x) = px$.

\begin{thm}
     The quiver invariant $\mathcal{Q}^{\varphi}_{R_{p^2}}$  distinguishes $T(2,p)\# T(2,p)$ from $P(p^2,2r,p^2)$.
\end{thm}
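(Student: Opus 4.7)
The plan is to compute the in-degree polynomial of each of the two quivers explicitly and check that they differ. The crucial observation is that $\varphi(x) = px$ acts on a coloring $\rho$ by post-composition, sending every strand label $\rho(s)$ to $p\rho(s) \bmod p^2$, so describing the induced map on coloring sets reduces to counting tuples of labels that become divisible by $p$.

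First I would fix convenient parameterizations. The proof of Lemma \ref{lemmpretz} identifies colorings of $P(p^2, 2r, p^2)$ with pairs $(x, y) \in (\mathbb{Z}/p^2\mathbb{Z})^2$, where $x$ labels $s_1$ and $y$ labels $s_2 = s_3$. For $T(2, p) \# T(2, p)$, I would apply the same twist-region calculation to a single $T(2, p)$ summand, closing up the two strands to find that its colorings are bridge-label pairs $(a, b) \in (\mathbb{Z}/p^2\mathbb{Z})^2$ satisfying $b \equiv a \pmod{p}$. Identifying the shared bridge in the connected sum then parameterizes colorings of $T(2, p) \# T(2, p)$ by triples $(a, b, c) \in (\mathbb{Z}/p^2\mathbb{Z})^3$ with $b \equiv a$ and $c \equiv b$ modulo $p$, and a direct count recovers the expected $p^4$ colorings.

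The induced maps on the two coloring sets are then $(x, y) \mapsto (px, py)$ and $(a, b, c) \mapsto (pa, pb, pc)$ modulo $p^2$. For $P(p^2, 2r, p^2)$, a vertex $(X, Y)$ has positive in-degree iff both $X$ and $Y$ are divisible by $p$, and any such vertex admits exactly $p \cdot p = p^2$ preimages; this yields $p^2$ vertices of in-degree $p^2$. For $T(2, p) \# T(2, p)$, a target $(p\alpha, p\beta, p\gamma)$ has preimages only when every triple $(a, b, c)$ mapping to it also satisfies the admissibility congruences $b \equiv a$ and $c \equiv b \pmod{p}$, which forces $\alpha \equiv \beta \equiv \gamma \pmod{p}$. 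This leaves only $p$ vertices of positive in-degree, each with $p^3$ preimages.

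Consequently, the in-degree polynomials are
\[
(p^4 - p^2) + p^2\, u^{p^2} \quad \text{for } P(p^2, 2r, p^2), \qquad (p^4 - p) + p\, u^{p^3} \quad \text{for } T(2, p) \# T(2, p),
\]
and these differ for every odd prime $p$, so the two quivers are non-isomorphic. The main obstacle I anticipate is the bookkeeping at the connected-sum bridge: one must verify that the two congruence conditions inherited from the $T(2, p)$ summands combine cleanly into $\alpha \equiv \beta \equiv \gamma \pmod{p}$ on preimages, and that no preimages are lost when the labels on the common bridge are required to agree. Once the parameterizations above are in hand, both checks are immediate.
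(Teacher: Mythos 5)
Your proposal is correct and follows essentially the same route as the paper: identical parameterizations of the coloring sets (pairs $(x,y)$ for the pretzel link, triples with the mod-$p$ congruence conditions for the connected sum), the same analysis of preimages under $\varphi(x)=px$, and the same counts of $p^2$ vertices of in-degree $p^2$ versus $p$ vertices of in-degree $p^3$. The only cosmetic difference is that you record the conclusion as explicit in-degree polynomials, which the paper leaves implicit.
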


\begin{proof}
    From the proof of Lemma \ref{lemmpretz}, we see that the set of vertices of $\mathcal{Q}^{\varphi}_{R_{p^2}}(P(p^2,2r,p^2))$
    can be identified with the set
    \[ V = \{ (x , y ) \,|\, x,y \in \mathbb{Z}/p^2\mathbb{Z} \} \]
    We see that there is an edge from $v = (x,y)$ to $v' = (x',y')$ if and only if $x' = px \pmod{p^2}$ and $y' = py \pmod{p^2}$. 
    In particular, the vertex $v'$ has an incoming edge when both $x'$ and $y'$ are divisible by $p$. Moreover, when $x' = pa'$, there are exactly $p$ solutions for $x$ solving $pa' = px \pmod{p^2}$. We can deduce that there are exactly $p^2$ vertices with incoming edges and each of them has in-degree $p^2$.

   On the other hand, we know how to precisely describe all the colorings of $T(2,p)$  by $R_{p^2}$ (c.f. \cite{elhamdadi2025quandle}).  
   In particular, we can identify them with the set
   $$\{ (x , y ) \in \mathbb{Z}/p^2\mathbb{Z} \times \mathbb{Z}/p^2\mathbb{Z}   \,|\, px = py \pmod{p^2} \}. $$
   Note that the condition $px = py \pmod{p^2}$ is equivalent to $x = y \pmod{p}$.

   For the connected sum $T(2,p)\# T(2,p)$, we can analogously identify the colorings of another summand as the pair $(z,w)$ with $z = w \pmod{p}$. By connecting the strand $y$ with the strand $w$, we can describe the set of vertices of $\mathcal{Q}^{\varphi}_{R_{p^2}}(T(2,p)\# T(2,p))$ by the set
   $$ \{ (x , y , z) \in \mathbb{Z}/p^2\mathbb{Z} \times \mathbb{Z}/p^2\mathbb{Z} \times \mathbb{Z}/p^2\mathbb{Z}   \,|\, px = py = pz \pmod{p^2} \} $$
   We see that there is an edge from $v = (x,y,z)$ to $v' = (x',y',z')$ when $x' = px$, $y'=py$, and $z'=pz$. 
   Since $px = py = pz \pmod{p^2}$, this forces $x'=y'=z'$ and $p \mid x'$. Thus, there are exactly $p$ vertices with incoming edges.
   Since there are exactly $p$ solutions for $x$ solving $pa' = px \pmod{p^2}$, each of those $p$ vertices has in-degree $p^3$.

   Therefore, the quivers $\mathcal{Q}^{\varphi}_{R_{p^2}}(P(p^2,2r,p^2)))$ and $\mathcal{Q}^{\varphi}_{R_{p^2}}(T(2,p)\# T(2,p))$ are not isomorphic and they are proper enhancement of the coloring invariants.  
   


\end{proof}

\section{Generalizations, discussions, and experimentations}

One can use a simple trick to obtain proper enhancements for knotted surfaces in 4-space. There is a standard construction of knotted 2-spheres in 4-space from classical knots in 3-space. This construction due to Zeeman is called \textit{twist-spinning} \cite{zeeman1965twisting}.

Take a knot in 3-space and remove a 1-string trivial tangle from it. This results in another tangle $(B^3,T).$ Then, $(B^3,T)\times I$ is a disk inside a 4-ball. Take two copies of this disk tangle, and glue them together with a diffeomorphism that rotates the knotted arc around its axis $m$ times.

The authors of \cite{sato2022bridge} argued that the number of colorings by a dihedral quandle of a classical knot is exactly the same as the number of colorings of its $2k$-twist spun. Therefore, examples in the previous section can be transported one dimension up to give examples of proper enhancements.

We end the paper with results in similar spirit with \cite{cho2019quandle2, istanbouli2020quandle}, but we use the column group enhancement instead of the quandle cocycle invariant or the quandle module invariant. Consider the following matrix for a quandle.
\[
\begin{bmatrix}
1 & 3 & 5 & 7 & 9 & 2 & 4 & 6 & 8 \\
9 & 2 & 4 & 6 & 8 & 1 & 3 & 5 & 7 \\
8 & 1 & 3 & 5 & 7 & 9 & 2 & 4 & 6 \\
7 & 9 & 2 & 4 & 6 & 8 & 1 & 3 & 5 \\
6 & 8 & 1 & 3 & 5 & 7 & 9 & 2 & 4 \\
5 & 7 & 9 & 2 & 4 & 6 & 8 & 1 & 3 \\
4 & 6 & 8 & 1 & 3 & 5 & 7 & 9 & 2 \\
3 & 5 & 7 & 9 & 2 & 4 & 6 & 8 & 1 \\
2 & 4 & 6 & 8 & 1 & 3 & 5 & 7 & 9
\end{bmatrix}
\]

Consider the knots \(9_{24}\) and \(6_1\). One can check that while the column enhancements for these knots are identical, their quiver enhancements are distinct.

The knot \(9_{24}\) has 9 trivial labelings. Since each column generates a subgroup of order 2, this contributes the term \(9u^2\) to our polynomial. Some colored diagrams have labelings that generate the subquandle \(\{1,4,7\}\). Now, consider columns 1, 4, and 7 of the matrix above. The subgroup of the permutation group generated by these columns has order 6, and there are 6 colored diagrams associated with the \(\{1,4,7\}\) subquandle. Similarly, one can verify that there are 6 colored diagrams associated with the \(\{2,5,8\}\) subquandle and 6 colored diagrams associated with the \(\{3,6,9\}\) subquandle. This contributes a term \(18u^6\) to the polynomial. Lastly, all other 54 colored diagrams generate a subgroup of order 18. In conclusion, the column group enhancement polynomial is
\[
54u^{18} + 18u^6 + 9u^2.
\]

We encourage the reader to check that the column group enhancement for \(6_1\) is also \(54u^{18} + 18u^6 + 9u^2\). Now, consider an endomorphism \(f(x)=3x\). With the aid of computers, we can verify that the in-degree polynomial of \(\mathcal{Q}^{f}_{R_{9}}(6_1)\) does not contain a \(u^3\) term, whereas \(\mathcal{Q}^{f}_{R_{9}}(9_{24})\) does.

Recently, Nelson introduced an intriguing quiver representation–theoretic invariant of links and virtual links \cite{nelson2024quandle}. It would be interesting to construct infinitely many links to show that the quiver representation invariant is stronger than the quandle coloring quiver.

\bibliographystyle{amsplain}
\bibliography{ref}


\end{document}